\newtheorem{theorem}{Theorem}[section]
\newtheorem{lemma}[theorem]{Lemma}
\newtheorem{corollary}[theorem]{Corollary}
\theoremstyle{definition}
\theoremstyle{definitions}
\newtheorem{definition}[theorem]{Definition}
\newtheorem{remark}[theorem]{Remark}
\theoremstyle{notations}
\theoremstyle{remarks}
\journal{ }
\begin{document}

\begin{frontmatter}
\title{The Capacity of Wedge Sum of Spheres of Different Dimensions}
\author[]{Mojtaba~Mohareri}
\ead{m.mohareri@stu.um.ac.ir}
\author[]{Behrooz~Mashayekhy\corref{cor1}}
\ead{bmashf@um.ac.ir}
\author[]{Hanieh~Mirebrahimi}
\ead{h$_{-}$mirebrahimi@um.ac.ir}
\address{Department of Pure Mathematics, Center of Excellence in Analysis on Algebraic Structures, Ferdowsi University of
Mashhad,\\
P.O.Box 1159-91775, Mashhad, Iran.}
\cortext[cor1]{Corresponding author}
\begin{abstract}
 K. Borsuk in 1979, in the Topological Conference in Moscow, introduced the concept of the capacity of a compactum and raised some interesting questions about it.  In this paper, during computing the capacity of wedge sum of finitely many spheres of different dimensions and the complex projective plane, we give a negative answer to a question of Borsuk whether the capacity of a compactum determined by its homology properties.
\end{abstract}

\begin{keyword} Homotopy domination\sep Homotopy type \sep Moore space\sep Polyhedron \sep CW-complex \sep Compactum.

\MSC[2010]{55P15, 55P55, 55P20,54E30, 55Q20.}

\end{keyword}

\end{frontmatter}
\section{Introduction and Motivation}
 K. Borsuk in \cite{So}, introduced the concept of the capacity of a compactum (compact metric space) as follows:
 the capacity $C(A)$ of a compactum $A$ is the cardinality of the set of all shapes of compacta $X$ for which $\mathcal{S}
h(X) \leqslant \mathcal{S}h(A)$ (for more details, see \cite{Mar}).

For polyhedra,  the notions shape and shape domination in the above definition can be replaced by the notions homotopy type and homotopy domination, respectively. Indeed, by some known
results in shape theory one can conclude that for any polyhedron $P$, there is a one to one functorial correspondence between the shapes of compacta shape dominated by $P$ and the homotopy types of CW-complexes (not necessarily finite) homotopy dominated by $P$ (for both pointed and unpointed polyhedra) \cite{18}.

S. Mather in \cite{17} proved that every polyhedron dominates only countably many  different homotopy types (hence shapes). Note that the capacity of a topological space is a homotopy invariant, i.e., for topological spaces $X$ and $Y$ with the same homotopy type, $C(X)=C(Y)$. Hence it seems interesting to find topological spaces with finite capacity and compute the capacity of some of their well-known spaces.  Borsuk in \cite{So} asked a question: `` Is it true that the capacity of every finite polyhedron is finite? ''.  D. Kolodziejczyk in \cite{16} gave a negative answer to this question. Also, she investigated some conditions for polyhedra to have finite capacity (\cite{13, 18, 14, 15}). For instance, a polyhedron $Q$ with finite fundamental group $\pi_1 (Q)$ and a polyhedron $P$ with abelian fundamental group $\pi_1 (P)$ and finitely generated homology groups $H_i (\tilde{P})$, for $i\geq 2$ where $\tilde{P}$ is the universal cover of $P$, have finite capacities.

Borsuk in \cite{So} mentioned that the capacity of $\bigvee_k \mathbb{S}^1$ and $\mathbb{S}^n$ equals to $k+1$ and 2, respectively.  The authors in \cite{Moh} computed the capacity of Moore spaces $M(A,n)$ and Eilenberg-MacLane spaces $K(G,n)$. In fact, we showed that the capacities of a Moore space $M(A,n)$ and an Eilenberg-MacLane space $K(G,n)$ equal to the number of direct summands of $A$ and semidirect factors of  $G$, respectively, up to isomorphism. Also, we computed the capacity of  wedge sum of finitely many Moore spaces of different degrees and the capacity of the product of finitely many Eilenberg-MacLane spaces of different homotopy types. In particular, we showed that the capacity of $\bigvee_{n\in I} (\vee_{i_n} \mathbb{S}^n)$ equals to $\prod_{n\in I}(i_n +1)$ where $\vee_{i_n} \mathbb{S}^n$ denotes the wedge sum of $i_n$ copies of $\mathbb{S}^n$, $I$ is a finite subset of $\mathbb{N}\setminus \{ 1\}$ and $i_n \in \mathbb{N}$.

In Section 2, we will state some basic facts that we need for the rest of the paper. In Section 3, we will compute the capacity of wedge sum of finitely many spheres of different dimensions (containing spheres of dimension 1). As a consequence, we show that  two dimensional CW-complexes with free fundamental groups  have finite capacities.

Borsuk in \cite{So} stated some questions concerning properties of the capacity of
compacta. One of them is as follow:
\begin{center}
  ``Is the capacity $C(A)$ determined by the homology properties of $A$?''
\end{center}
In Section 4, we give a negative answer to this question, in general.  However, the authors in \cite{Moh}  showed that the capacity of Moore spaces determined by their homology properties.

\section{Preliminaries}
In this paper, every CW-complex is assumed to be finite and connected and every map between two CW-complexes is assumed to be cellular. We assume that the reader is familiar with the basic notions and facts of homotopy theory. We need the following results and definitions for the rest of the paper.
\begin{definition}\cite{Wall}.
Let $\phi :K\longrightarrow X$ be a map between CW-complexes with mapping cylinder $M=X\bigcup_{\phi} (K\times I)$. Denote $\pi_n (M,K\times \{ 1\} )$  by $\pi_n (\phi )$. The map $\phi$ is called $n$-connected if $K$ and $X$ are connected and $\pi_i (\phi )=0$ for $1\leq i\leq n$.
\end{definition}
\begin{theorem}\cite[Theorem 4.37]{3}. \label{Hat}
If $(X,A)$ is an $(n-1)$-connected pair of path-connected spaces
with $n \geq 2$ and $A\neq \emptyset$, then the Hurewicz homomorphism $h_n :\pi_n (X,A)\longrightarrow H_n (X,A)$ is an isomorphism and $H_i (X,A)=0$ for $i<n$.
\end{theorem}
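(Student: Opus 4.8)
This is the relative Hurewicz theorem, and the plan is to prove it by reducing $(X,A)$ to a convenient CW pair and comparing $\pi_n$ with $H_n$ one relative skeleton at a time. First I would invoke CW approximation: because $h_n$ is natural and both $\pi_*(X,A)$ and $H_*(X,A)$ are invariant under weak homotopy equivalence of pairs, I may assume $(X,A)$ is a CW pair without changing any group in the statement.

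Next I would use the connectivity hypothesis --- $\pi_i(X,A)=0$ for $i\le n-1$ --- to simplify the cell structure. A cellular compression argument replaces $(X,A)$ by a homotopy equivalent (rel $A$) CW pair $(X',A)$ all of whose relative cells have dimension $\ge n$. Then the relative cellular chain groups $C_i(X',A)$ vanish for $i<n$, so $H_i(X,A)=H_i(X',A)=0$ for $i<n$; this settles the vanishing assertion.

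For the isomorphism in degree $n$, I would filter $X'$ by its relative skeleta $A=X'_{n-1}\subseteq X'_n\subseteq\cdots$, where $X'_k$ is obtained from $A$ by attaching the relative cells of dimension $\le k$. Relative cells above dimension $n+1$ do not affect $\pi_n(\,\cdot\,,A)$, so $\pi_n(X'_{n+1},A)\cong\pi_n(X',A)$ and only $X'_n=A\cup\bigcup_\alpha e^n_\alpha$ and the $(n+1)$-cells matter. The heart of the argument is to identify $\pi_n(X'_n,A)=\pi_n\bigl(A\cup\bigcup_\alpha e^n_\alpha,\,A\bigr)$ with the free abelian group $\bigoplus_\alpha\Z$ on the characteristic maps of the $n$-cells (with $A$ simply connected, so that the $\pi_1(A)$-action is trivial). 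Attaching the $(n+1)$-cells along maps $\psi_\beta\colon S^n\to X'_n$ then presents $\pi_n(X'_{n+1},A)$ as this free group modulo the subgroup generated by the classes $[\psi_\beta]$. On the homology side $C_{n-1}(X',A)=0$ forces $H_n(X',A)=C_n(X',A)/\operatorname{im}\partial_{n+1}$, and $\partial_{n+1}$ encodes exactly the coefficients of the same attaching maps. Since $h_n$ carries the class of each characteristic map to the corresponding cellular generator and is natural, the two presentations coincide and $h_n$ is an isomorphism.

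The step I expect to be the main obstacle is the computation of $\pi_n\bigl(A\cup\bigcup_\alpha e^n_\alpha,\,A\bigr)$ and the matching of the relations from the attaching maps with $\partial_{n+1}$. This rests on homotopy excision (the Blakers--Massey theorem): collapsing $A$ gives $X'_n/A\simeq\bigvee_\alpha S^n$, and one needs the quotient map to be sufficiently highly connected to transport the absolute Hurewicz isomorphism for $\bigvee_\alpha S^n$ back to the pair. This is also where the hypothesis on $A$ is genuinely used: for simply connected $A$ the $\pi_1(A)$-action is trivial and one obtains $\bigoplus_\alpha\Z$ exactly, whereas for general path-connected $A$ the group $H_n(X,A)$ is only the quotient of $\pi_n(X,A)$ by this action, so the stated isomorphism must be understood with that proviso.
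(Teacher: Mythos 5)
This statement is Theorem 4.37 of Hatcher's book, quoted by the paper without proof, so there is no internal argument to compare yours against; the fair comparison is with Hatcher's own proof, and your sketch follows essentially that line (CW approximation of the pair, compression so that all relative cells have dimension $\geq n$, identification of $\pi_n$ of the relative $n$-skeleton via homotopy excision and the collapse $X'_n/A\simeq\bigvee_\alpha\mathbb{S}^n$, and matching of the attaching-map relations with the cellular boundary $\partial_{n+1}$). Those steps are the standard ones and are sound as outlined.

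The substantive point is the one you raise at the end, and you are right to insist on it: as literally stated, with $A$ only path-connected, the theorem is false. Hatcher's Theorem 4.37 asserts only that $h_n$ induces an isomorphism $\pi'_n(X,A)\cong H_n(X,A)$, where $\pi'_n(X,A)$ is the quotient of $\pi_n(X,A)$ by the subgroup generated by the elements $[f]-\gamma[f]$ with $\gamma\in\pi_1(A)$; the transcription in the paper drops this quotient. A concrete counterexample to the stronger claim: the pair $(\mathbb{S}^1\vee\mathbb{S}^2,\mathbb{S}^1)$ is $1$-connected, and $\pi_2(\mathbb{S}^1\vee\mathbb{S}^2,\mathbb{S}^1)\cong\pi_2(\mathbb{S}^1\vee\mathbb{S}^2)\cong\mathbb{Z}[t,t^{-1}]$ (free abelian of infinite rank), while $H_2(\mathbb{S}^1\vee\mathbb{S}^2,\mathbb{S}^1)\cong\mathbb{Z}$, so $h_2$ is the non-injective quotient map. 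Accordingly, your proof, which invokes simple connectivity of $A$ exactly where the $\pi_1(A)$-action enters, establishes the corrected statement rather than the stated one; to prove the general (quotiented) version one needs the additional step, as in Hatcher, of identifying the kernel of $\pi_n(X,A)\to\pi_n(X\cup CA)$ with the subgroup generated by the action differences. Note finally that this discrepancy does not damage the paper: in the proof of Theorem \ref{ASL} the result is applied to the pair of universal covers $(\tilde{A},\tilde{K})$, which are simply connected, so the action is trivial there and the unquotiented isomorphism is legitimate.
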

Recall that a pair $(X,A)$ of topological spaces is called $n$-connected $(n\geq 0)$ if $\pi_i (X,A)=0$ for $0\leq i\leq n$.
\begin{theorem}\cite[Proposition 4.1]{3}. \label{Hat1}
A covering projection $p:(\tilde{X},\tilde{x}_0 )\longrightarrow (X,x_0 )$ induces isomorphisms
$p_* :\pi_n (\tilde{X},\tilde{x}_0 )\longrightarrow \pi_n (X,x_0 )$ for all $n \geq 2$.
\end{theorem}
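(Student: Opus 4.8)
My plan is to treat the covering projection as a fibration with discrete fibre and to read off the conclusion from the associated long exact sequence of homotopy groups. First I would observe that any covering projection $p:(\tilde{X},\tilde{x}_0)\lo(X,x_0)$ enjoys the homotopy lifting property with respect to all spaces, so it is a Hurewicz fibration, and that its fibre $F=p^{-1}(x_0)$ is a discrete space; consequently $\pi_i(F,\tilde{x}_0)=0$ for every $i\geq 1$. Writing the long exact sequence of this fibration,
$$\cdots \lo \pi_n(F,\tilde{x}_0)\lo \pi_n(\tilde{X},\tilde{x}_0)\st{p_*}{\lo}\pi_n(X,x_0)\lo \pi_{n-1}(F,\tilde{x}_0)\lo \cdots,$$
I note that for $n\geq 2$ both indices $n$ and $n-1$ are at least $1$, so the two terms flanking $p_*$ vanish. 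Exactness then shows that $p_*$ has trivial kernel and full image, i.e. it is an isomorphism.

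As a more elementary alternative I would argue directly with lifting. For surjectivity, a class in $\pi_n(X,x_0)$ is represented by some $f:(S^n,s_0)\lo(X,x_0)$; since $n\geq 2$ the sphere $S^n$ is simply connected, so the lifting criterion for covering spaces produces a lift $\tilde{f}:(S^n,s_0)\lo(\tilde{X},\tilde{x}_0)$ with $p\circ\tilde{f}=f$, whence $p_*[\tilde{f}]=[f]$. For injectivity, given $\tilde{f}$ with $p\circ\tilde{f}$ nullhomotopic rel basepoint, I would lift a chosen based nullhomotopy by the homotopy lifting property and then verify that the lifted homotopy is again based: the track of the basepoint lifts the constant path and hence, by unique path lifting, stays at $\tilde{x}_0$, while the terminal level maps the connected space $S^n$ into the discrete fibre $F$ and is therefore constant at $\tilde{x}_0$.

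The only genuinely delicate point in the second approach is this basepoint bookkeeping---ensuring that the lifted nullhomotopy both begins and ends at $\tilde{x}_0$ and fixes the basepoint throughout---which is exactly what forces the use of unique path lifting together with the discreteness of the fibre. In the fibration approach the same subtlety is instead absorbed into the verification that a covering map is a fibration with discrete fibre, after which the conclusion is purely formal.
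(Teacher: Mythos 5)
Your proof is correct. Note that the paper itself gives no argument for this statement: it is quoted as a preliminary result directly from Hatcher (Proposition 4.1), so the relevant comparison is with the proof in that cited source. Your second, elementary argument is essentially Hatcher's: surjectivity comes from the lifting criterion applied to $S^n$, which is simply connected once $n\geq 2$, and injectivity from the homotopy lifting property together with unique path lifting and the discreteness of the fibre, exactly the basepoint bookkeeping you describe. Your first argument---viewing the covering projection as a Hurewicz fibration with discrete fibre $F$ and reading off the isomorphism from the long exact sequence, since $\pi_n(F)=\pi_{n-1}(F)=0$ for $n\geq 2$---is a correct alternative; it absorbs the delicate lifting details into the one-time verification that a covering map is a fibration, at the cost of invoking heavier machinery than the covering-space lemmas actually needed.
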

\begin{definition}\cite{Wall}.
Let $X$ be a CW-complex. Then conditions $\mathcal{F}_i$ and $\mathcal{D}_i$ on $X$ are defined inductively as follows:

$\mathcal{F}_1$: the group $\pi_1 (X)$ is finitely generated.

$\mathcal{F}_2$: the group $\pi_1 (X)$ is finitely presented, and for any  2-dimensional finite CW-complex $K$ and any map $\phi :K\longrightarrow X$ inducing an isomorphism of fundamental groups, $\pi_2 (\phi )$is a finitely generated module over $\mathbb{Z}\pi_1 (X)$.

$\mathcal{F}_n$: the condition $\mathcal{F}_{n-1}$ holds, and for any $(n-1)$-dimensional finite CW-complex $K$ and any $(n-1)$-connected map $\phi :K\longrightarrow X$, $\pi_n (\phi )$ is a finitely generated $\mathbb{Z}\pi_1 (X)$-module.

$\mathcal{D}_n$: $H_i(\tilde{X})=0$ for $i>n$, and $H^{n+1}(X;\mathcal{B})=0$ for all coefficient bundles $\mathcal{B}$ (for more detalis, see \cite{STE}).
\end{definition}
\begin{theorem}\cite[Theorem F]{Wall}.\label{700}
A CW-complex $X$ is dominated by a finite CW-complex of dimension $n\geq 1$ if and only if $X$ satisfies $\mathcal{D}_n$ and $\mathcal{F}_n$.
\end{theorem}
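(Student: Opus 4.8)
\medskip
\noindent This is one of the central results of Wall's finiteness theory; the plan is to reconstruct his argument, treating an easy necessity direction and a substantial sufficiency direction. For \emph{necessity}, suppose $X$ is dominated by a finite CW-complex $K$ with $\dim K=n$, via maps $d\colon X\to K$ and $u\colon K\to X$ satisfying $u\circ d\simeq \mathrm{id}_X$. The guiding principle is that every invariant occurring in $\mathcal{D}_n$ and $\mathcal{F}_n$ is inherited by a homotopy retract. Thus $\pi_1(X)$ is a retract of the finitely presented group $\pi_1(K)$, hence itself finitely presented, which gives $\mathcal{F}_1$ and $\mathcal{F}_2$; similarly the relevant relative homotopy modules are retracts of the corresponding finitely generated $\mathbb{Z}\pi_1$-modules for $K$, giving $\mathcal{F}_n$. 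For $\mathcal{D}_n$, each $H_i(\tilde{X})$ is a direct summand of $H_i(\tilde{K})$, which vanishes for $i>\dim K=n$, while $H^{n+1}(X;\mathcal{B})$ is a retract of $H^{n+1}(K;\mathcal{B})=0$.

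For \emph{sufficiency}, assume $X$ satisfies $\mathcal{D}_n$ and $\mathcal{F}_n$. I would first build a finite complex of dimension $n$ together with an $n$-connected map to $X$, by induction on skeleta. Condition $\mathcal{F}_2$ yields a finite presentation of $\pi_1(X)$, hence a finite $2$-complex $K_2$ and a map $\phi_2\colon K_2\to X$ inducing an isomorphism on $\pi_1$, i.e.\ a $1$-connected map. Given inductively a $(k-1)$-connected map $\phi\colon K\to X$ out of a finite $(k-1)$-complex, the mapping-cylinder pair is $(k-1)$-connected, so the relative Hurewicz theorem (Theorem~\ref{Hat}) identifies $\pi_k(\phi)$ with a relative homology module, and condition $\mathcal{F}_k$ guarantees that this $\mathbb{Z}\pi_1$-module is finitely generated. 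Attaching finitely many $k$-cells along a generating set kills $\pi_k(\phi)$ and produces a $k$-connected map out of a finite $k$-complex. Iterating up to $k=n$ yields a finite complex $K$ with $\dim K=n$ and an $n$-connected map $\phi\colon K\to X$.

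It then remains to produce a homotopy section $d\colon X\to K$ with $\phi\circ d\simeq \mathrm{id}_X$, and this is where I expect the real difficulty. Converting $\phi$ to a fibration, its fibre $F$ is $(n-1)$-connected, so the primary obstruction to a section lies in $H^{n+1}(X;\pi_n(F))$ with $\pi_n(F)$ regarded as a coefficient bundle; it vanishes because $H^{n+1}(X;\mathcal{B})=0$ for every $\mathcal{B}$ by $\mathcal{D}_n$. Examining the long exact homology sequence of the pair $(\tilde{M},\tilde{K})$ and using $H_i(\tilde{X})=0$ for $i>n$ (with Theorem~\ref{Hat1} to read higher homotopy off the universal cover) shows that the only surviving relative module sits in degree $n+1$ and embeds into the finitely generated free module $C_n(\tilde{K})$. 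The crux is to verify that this module is in fact finitely generated \emph{projective} and that, together with the cohomological half of $\mathcal{D}_n$, all higher section obstructions vanish; the section $d$ then exists and exhibits $X$ as a homotopy retract of $K$.

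The decisive subtlety, and the step I expect to be the main obstacle, is the top dimension: the relative module produced at the last stage is only finitely generated projective, not necessarily free, so in general one cannot kill it by attaching finitely many cells while keeping the dimension equal to $n$ --- this is exactly where Wall's finiteness obstruction in $\widetilde{K}_0(\mathbb{Z}\pi_1)$ would block a genuine homotopy equivalence to a finite complex. For mere \emph{domination}, however, a projective summand is harmless, since a finitely generated projective module is a retract of a finitely generated free one; realizing this retraction geometrically is what produces the dominating finite $n$-complex without requiring the obstruction to vanish. Carrying the obstruction-theoretic section through this projective-versus-free discrepancy, uniformly over all coefficient bundles $\mathcal{B}$, is the technical heart of the argument.
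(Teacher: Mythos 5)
First, a point of reference: the paper contains no proof of this statement at all --- Theorem \ref{700} is quoted verbatim from Wall \cite{Wall} as a preliminary --- so your attempt can only be measured against Wall's original argument, not against anything in this paper. Your necessity direction matches Wall in outline (a finite $n$-complex trivially satisfies both conditions, and the conditions pass to homotopy retracts), but the inheritance of $\mathcal{F}_n$ is glossed too quickly: $\mathcal{F}_n$ quantifies over \emph{all} $(n-1)$-connected maps $\psi\colon L\to X$, and composing such a $\psi$ with the section $d\colon X\to K$ does not produce a map inducing an isomorphism onto $\pi_1(K)$ (the retraction $\pi_1(K)\to\pi_1(X)$ need not be injective), so one cannot simply invoke $\mathcal{F}_n$ for $K$ and take retracts of modules; Wall proves domination-invariance of the $\mathcal{F}$-conditions with a genuine argument.

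The serious gap is in your sufficiency direction. Your obstruction-theoretic construction of the homotopy section $d$ needs the obstructions in $H^{i+1}(X;\pi_i(F))$ to vanish for \emph{every} $i\geq n$, but $\mathcal{D}_n$ as stated kills only the primary one ($i=n$). The missing ingredient is Wall's dimension-shifting lemma: if $H_i(\tilde{X})=0$ for $i>n$ and $H^{n+1}(X;\mathcal{B})=0$ for all coefficient bundles $\mathcal{B}$, then $H^{i}(X;\mathcal{B})=0$ for all $i>n$ and all $\mathcal{B}$ --- this is exactly why $\mathcal{D}_n$ demands vanishing for every bundle, the class of coefficients being closed under the shift. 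You flag ``all higher section obstructions vanish'' as something to be verified but never supply the mechanism, and without it the argument does not close. Moreover, what you single out as the ``technical heart'' --- that $\pi_{n+1}(\phi)$ is finitely generated projective rather than free, and the appeal to $\widetilde{K}_0(\mathbb{Z}\pi_1)$ --- is not needed for Theorem F at all: projectivity of that module is the input to Wall's \emph{finiteness obstruction} (when is a finitely dominated complex homotopy equivalent to a finite one?), a question downstream of this theorem, and Wall establishes that projectivity only \emph{after} domination is known. For the domination statement itself the dominating complex is simply the finite $n$-complex $K$ you already built: once the higher cohomological vanishing is in hand (equivalently, once Wall's Theorem A replaces $X$ by a possibly infinite complex $X'$ of dimension $\leq n$, so that $\mathrm{id}_X$ lifts through the $n$-connected map $\phi$ because $[X',K]\to[X',X]$ is surjective in dimensions $\leq n$), nothing further is required. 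Finally, the coefficient-bundle obstruction theory you invoke needs the fibre to be simply connected, i.e.\ $n\geq 2$; the case $n=1$ requires a separate elementary argument.
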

\begin{definition}\cite{Rob}.
A group $G$ is called Hopfian if every epimorphism $f :G\longrightarrow G$ is an automorphism
(equivalently, $N = 1$ is the only normal subgroup of $G$ for which $G/N\cong G$).
\end{definition}
 \begin{definition}\citep{4}.
A Moore space of degree $n$ $(n\geq 2)$ is a simply connected $CW$-complex $X$ with a single non-vanishing homology group of degree $n$, that is $\tilde{H}_{i}(X,\mathbb{Z})=0$ for $i\neq n$. A Moore space is denoted by $M(A,n)$ where $A\cong \tilde{H}_{n} (X,\mathbb{Z})$.
\end{definition}
Note that for $n=1$, the Moore space $M(A,1)$ can not be defined because of some problems in existence and uniqueness of the space (for more details see \cite{3}).
\begin{theorem}\label{-2}\cite{4}.
The homotopy type of a  Moore space $M(A,n)$ is uniquely determined by $A$ and $n$ ($n > 1$).
\end{theorem}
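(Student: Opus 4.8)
The plan is to reduce the uniqueness statement to a comparison between an arbitrary Moore space of type $M(A,n)$ and one explicit CW-model built from an algebraic resolution of $A$, and then to invoke the homology Whitehead theorem. First I would fix a convenient model. Since every subgroup of a free abelian group is free, the group $A$ admits a free resolution $0\to F_1\xrightarrow{\partial}F_0\to A\to 0$ of length one with $F_0,F_1$ free abelian. Choosing bases $\{e_\alpha\}$ of $F_0$ and $\{f_\beta\}$ of $F_1$, form the wedge $W=\bigvee_\alpha \mathbb{S}^n_\alpha$. Then $W$ is $(n-1)$-connected, and the absolute Hurewicz theorem --- the special case of Theorem~\ref{Hat} in which the subspace is a single point --- gives $\pi_n(W)\cong H_n(W)\cong F_0$. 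Each $\partial(f_\beta)\in F_0$ therefore represents a homotopy class $\mathbb{S}^n\to W$, and attaching an $(n+1)$-cell along each of these classes produces a CW-complex $X$. Since the cellular boundary $C_{n+1}(X)=F_1\to C_n(X)=F_0$ is exactly $\partial$, a direct chain computation gives $H_{n+1}(X)=\ker\partial=0$ and $H_n(X)=\operatorname{coker}\partial\cong A$, with all other reduced homology vanishing; thus $X$ is a Moore space of type $M(A,n)$.

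Next, given any Moore space $Y$ of type $M(A,n)$, I would construct a map $f:X\to Y$ inducing an isomorphism on homology. As $Y$ is $(n-1)$-connected, Theorem~\ref{Hat} again yields $\pi_n(Y)\cong H_n(Y)\cong A$. For each $\alpha$, choose an element of $\pi_n(Y)$ lying over the image of $e_\alpha$ under $F_0\twoheadrightarrow A$, and represent it by a map $\mathbb{S}^n_\alpha\to Y$; together these define $f_0:W\to Y$ realizing the projection $F_0\to A$ on $\pi_n\cong H_n$. To extend $f_0$ over the $(n+1)$-cells, note that the cell indexed by $\beta$ is attached along $\partial(f_\beta)$, and the image $(f_0)_*\partial(f_\beta)\in\pi_n(Y)$ corresponds, under the natural Hurewicz isomorphisms, to the image of $\partial(f_\beta)$ in $A$, which is $0$ by exactness of the resolution. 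Hence each attaching map is null-homotopic in $Y$, so $f_0$ extends to $f:X\to Y$, and by construction $f_*:H_n(X)\to H_n(Y)$ is an isomorphism.

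Finally, since both $X$ and $Y$ have reduced homology concentrated in degree $n$, the map $f$ induces isomorphisms on all reduced homology groups; both spaces being simply connected CW-complexes (here the hypothesis $n>1$ is essential), the homology Whitehead theorem shows that $f$ is a homotopy equivalence. Thus every Moore space of type $M(A,n)$ is homotopy equivalent to the single model $X$, which gives the desired uniqueness. I expect the principal obstacle to be the extension step: one must verify carefully that the naturality of the Hurewicz isomorphism identifies $(f_0)_*\partial(f_\beta)$ with the image of $\partial(f_\beta)$ in $A$, so that exactness of the resolution is precisely what forces the attaching maps to become trivial in $Y$. The remaining ingredients --- the existence of the resolution, the cellular chain computation, and Whitehead's theorem --- are comparatively routine.
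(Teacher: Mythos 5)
Your proof is correct, but there is nothing in the paper to compare it with: the paper states this theorem as a citation to Baues \cite{4} and gives no internal proof. Your argument is the standard textbook one (essentially Hatcher's treatment of Moore spaces): build one explicit model $X$ by attaching $(n+1)$-cells to a wedge of $n$-spheres according to a free resolution $0\to F_1\stackrel{\partial}{\to} F_0\to A\to 0$, map that model to an arbitrary Moore space $Y$ by realizing generators on $\pi_n \cong H_n$ and extending over the $(n+1)$-cells (exactness makes the attaching maps null-homotopic in $Y$), and finish with the homology Whitehead theorem (Theorem \ref{3031}), which applies since $n>1$ guarantees both spaces are simply connected CW-complexes. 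Only one step deserves more justification than you give it: the assertion that an arbitrary Moore space $Y$ is $(n-1)$-connected is not part of the definition, which provides only simple connectivity and vanishing of $\tilde{H}_i(Y)$ for $i\neq n$; it follows by an induction using the absolute Hurewicz theorem, since $\pi_1(Y)=0$ and then $\pi_i(Y)\cong H_i(Y)=0$ for $2\leq i\leq n-1$. With that supplied, every step you outline goes through, including the naturality check you flag at the end, which is precisely the statement that the Hurewicz homomorphism commutes with the maps induced by $f_0$ on $\pi_n$ and $H_n$.
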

\begin{theorem}\label{3030}\cite{3}.
If a map $f :X\longrightarrow Y$ between connected CW-complexes induces isomorphisms
$f_* :\pi_n (X)\longrightarrow \pi_n (Y)$  for all $n\geq 1$, then $f$ is a homotopy equivalence.
\end{theorem}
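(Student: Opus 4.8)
This is the classical Whitehead theorem, so the plan is to follow the standard route: replace $f$ by an inclusion via the mapping cylinder, translate the hypothesis into the vanishing of all relative homotopy groups through the long exact sequence of a pair, and then compress the identity map cell by cell to produce a deformation retraction.

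First I would factor $f$ through its mapping cylinder $M=Y\cup_f (X\times I)$. The natural projection $r:M\lo Y$ is a homotopy equivalence, and $f$ is homotopic to the composite of the inclusion $i:X\hookrightarrow M$ with $r$. Hence it suffices to prove that $i$ is a homotopy equivalence, and for this I may work with the CW-pair $(M,X)$, giving $M$ its natural CW-structure in which $X$ is a subcomplex. Next I would feed the hypothesis into the long exact sequence of the pair $(M,X)$,
$$\cdots \lo \pi_n(X)\st{i_*}{\lo}\pi_n(M)\lo \pi_n(M,X)\lo \pi_{n-1}(X)\lo\cdots .$$
Since $r$ is a homotopy equivalence, $r_*$ is an isomorphism on all homotopy groups, and because $f_*=r_*\circ i_*$ is an isomorphism for every $n\geq 1$ by hypothesis, the map $i_*:\pi_n(X)\lo\pi_n(M)$ is an isomorphism for all $n\geq 1$ as well. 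Exactness then forces $\pi_n(M,X)=0$ for every $n$: the boundary map must vanish because $i_*$ is injective, and then $j_*:\pi_n(M)\lo\pi_n(M,X)$ is simultaneously surjective (kernel of $\partial$) and zero (image of the surjective $i_*$), so the middle term dies. Thus the pair $(M,X)$ is $n$-connected for all $n$.

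Finally, I would upgrade this vanishing to an honest deformation retraction. For the CW-pair $(M,X)$ with $\pi_n(M,X)=0$ for all $n$, the compression lemma lets one homotope the identity map of $M$, rel $X$, into $X$ by working over the cells of $M\setminus X$: the vanishing of $\pi_n(M,X)$ is precisely the obstruction to compressing each $n$-cell into $X$. Assembling these compressions over the skeleta produces a deformation retraction of $M$ onto $X$, so $i$ is a homotopy equivalence and therefore so is $f$.

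I expect the main obstacle to be this last step, namely carrying out the inductive, skeleton-by-skeleton compression so that the partial homotopies are mutually compatible and glue to a single continuous deformation. The cellular induction must be arranged so that the homotopy built on the $n$-skeleton extends the one already constructed on the $(n-1)$-skeleton, and when $M$ is infinite-dimensional one must guarantee convergence of the process, which is typically handled by performing the $n$-th stage within the time interval $[1-2^{-n},\,1-2^{-n-1}]$.
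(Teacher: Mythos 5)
Your proposal is correct and is exactly the standard argument from the paper's cited source (Hatcher, Theorem 4.5): factor $f$ through the mapping cylinder, deduce $\pi_n(M,X)=0$ for all $n$ from the long exact sequence, and apply the compression lemma to deformation retract $M$ onto $X$. The paper itself gives no proof (it cites \cite{3}), and the only point worth noting is that the mapping cylinder carries a CW structure with $X$ as a subcomplex because $f$ may be taken cellular --- a hypothesis the paper imposes on all maps anyway.
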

\begin{theorem}\label{3031}\cite{3}.
A map $f :X\longrightarrow Y$ between simply-connected CW-complexes is a homotopy
equivalence if $f_* :H_n (X) \longrightarrow H_n (Y) $  is an isomorphism for all $n\geq 2$.
\end{theorem}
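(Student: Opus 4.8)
The plan is to deduce this homological Whitehead criterion from the homotopical one (Theorem \ref{3030}), using the relative Hurewicz isomorphism (Theorem \ref{Hat}) as the bridge. Since $X$ and $Y$ are simply connected they are in particular path connected, so $f_*$ is automatically an isomorphism on $H_0$ (both equal $\mathbb{Z}$) and on $H_1$ (both trivial, being abelianizations of the trivial fundamental group); together with the hypothesis, this shows $f_* : H_n(X) \to H_n(Y)$ is an isomorphism for every $n \geq 0$. The goal is to promote this to an isomorphism $f_* : \pi_n(X) \to \pi_n(Y)$ for every $n \geq 1$, at which point Theorem \ref{3030} immediately yields that $f$ is a homotopy equivalence.

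First I would replace $f$ by the inclusion $i : X \hookrightarrow M_f$ of $X$ into the mapping cylinder $M_f = Y \cup_f (X \times I)$. The canonical retraction $M_f \to Y$ is a homotopy equivalence through which $f$ factors, so $f$ is a homotopy equivalence if and only if $i$ is; moreover the isomorphisms $H_n(M_f) \cong H_n(Y)$ and $\pi_n(M_f) \cong \pi_n(Y)$ are compatible with $f_*$ and $i_*$. This converts the problem into a statement about the genuine pair $(M_f, X)$, whose relative groups sit in long exact sequences.

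Next I would exploit the long exact homology sequence of $(M_f, X)$. Because $i_* : H_n(X) \to H_n(M_f)$ is an isomorphism for all $n \geq 0$, exactness forces $H_n(M_f, X) = 0$ for every $n$. I would also record that the pair $(M_f, X)$ is $1$-connected: both $M_f$ and $X$ are simply connected and path connected, so the homotopy long exact sequence gives $\pi_1(M_f, X) = 0$. The core of the argument is then an induction on $n$ via Theorem \ref{Hat}. Suppose $(M_f, X)$ is $(n-1)$-connected for some $n \geq 2$; since the subspace $X$ is simply connected there is no $\pi_1$-action to complicate matters, and Theorem \ref{Hat} furnishes a Hurewicz isomorphism $\pi_n(M_f, X) \cong H_n(M_f, X) = 0$, so the pair is in fact $n$-connected. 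Starting from the $1$-connected base case, this shows $\pi_n(M_f, X) = 0$ for all $n$. Feeding this back into the long exact homotopy sequence of the pair makes $i_* : \pi_n(X) \to \pi_n(M_f)$ an isomorphism for every $n \geq 1$, hence $f_*$ is an isomorphism on all homotopy groups, and Theorem \ref{3030} completes the proof.

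I expect the main obstacle to be the careful bookkeeping of connectivity so that Theorem \ref{Hat} is applicable at each stage---in particular establishing the base case that $(M_f, X)$ is simply connected, and confirming that the vanishing relative homology group is precisely the target of the Hurewicz map being invoked. Once this inductive machine is set up, the remaining work is purely formal chasing through the homology and homotopy long exact sequences of the pair.
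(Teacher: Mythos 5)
Your proof is correct and is essentially the canonical argument for this result: the paper offers no proof of its own (it cites the statement from Hatcher \cite{3}, where it is Corollary 4.33), and your route---replacing $f$ by the inclusion into the mapping cylinder, killing the relative homology groups via the long exact sequence, inducting with the relative Hurewicz theorem (Theorem \ref{Hat}) to kill the relative homotopy groups, and finishing with Whitehead's theorem (Theorem \ref{3030})---is precisely the proof given in that reference. Your observation that simple connectivity of $X$ eliminates the $\pi_1$-action, so that Theorem \ref{Hat} applies with $\pi_n(M_f,X)$ itself rather than its quotient, is exactly the point needed to make the induction run.
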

\begin{theorem}\label{-1}\cite{4}.
1) A connected CW-complex $X$ is contractible if and only if all its homotopy groups $\pi_n (X)$ ($n\geq 1$) are trivial.

2) A simply connected CW-complex $X$ is contractible if and only if all its homology groups $H_n (X)$ ($n\geq 2$) are trivial.
\end{theorem}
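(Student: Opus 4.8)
The plan is to reduce both equivalences to the Whitehead-type recognition theorems already recorded as Theorems \ref{3030} and \ref{3031}, applied to the unique map from $X$ to a one-point space $*$.

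The forward implications are immediate, and I would dispose of them first. If $X$ is contractible then $X \simeq *$, and since homotopy groups and homology groups are homotopy invariants, $\pi_n (X)\cong \pi_n(*)=0$ for all $n\geq 1$ and $H_n(X)\cong H_n(*)=0$ for all $n\geq 1$ (in particular for $n\geq 2$). This settles the ``only if'' direction of both 1) and 2).

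For the converse of 1), I would consider the constant map $c:X\longrightarrow *$, noting that $*$ is a connected CW-complex. By hypothesis $\pi_n(X)=0$ for every $n\geq 1$, and $\pi_n(*)=0$ as well, so each induced homomorphism $c_*:\pi_n(X)\longrightarrow \pi_n(*)$ is an isomorphism (a map between trivial groups). Theorem \ref{3030} then yields that $c$ is a homotopy equivalence, i.e. $X$ is contractible. For the converse of 2), I would run the same argument with homology in place of homotopy: both $X$ and $*$ are simply connected CW-complexes, and the hypothesis $H_n(X)=0$ for $n\geq 2$ together with $H_n(*)=0$ makes $c_*:H_n(X)\longrightarrow H_n(*)$ an isomorphism for all $n\geq 2$; Theorem \ref{3031} then gives that $c$ is a homotopy equivalence.

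There is no genuine obstacle here: the entire content is packaged inside the cited Whitehead theorems, and the only things to verify are the bookkeeping hypotheses---that a point is a connected (indeed simply connected) CW-complex and that the constant map induces the relevant isomorphisms, both of which are automatic once the target groups are known to vanish. As an alternative route for 2), I could instead invoke the Hurewicz theorem to upgrade the vanishing of the homology groups to the vanishing of all homotopy groups of the simply connected complex $X$, and then quote part 1); I would nonetheless prefer the direct application of Theorem \ref{3031}, since it keeps the argument self-contained within the results already stated.
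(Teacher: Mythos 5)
Your proposal is correct. Note that the paper itself offers no proof of this statement: it is quoted from Baues's book (reference \cite{4}) and used as a known fact, so there is no internal argument to compare against. Your derivation is the standard one and is fully valid: the forward implications follow from homotopy invariance, and the converses follow by applying the Whitehead theorem (Theorem \ref{3030}) and its homological version for simply connected complexes (Theorem \ref{3031}) to the constant map $c:X\longrightarrow *$, after observing that a point is a (simply) connected CW-complex and that any homomorphism between trivial groups is an isomorphism. Your alternative route for part 2) -- invoking Hurewicz to deduce vanishing of all homotopy groups and then citing part 1) -- is equally sound; indeed, Theorem \ref{3031} is itself usually proved that way, so the two routes differ only in where the Hurewicz argument is packaged. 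The one virtue of your chosen presentation is that it stays entirely within results already stated in the paper, effectively making the cited theorem self-contained.
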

\begin{theorem}\cite{1}
\[
H_p (\mathbb{CP}^n )=
\begin{cases}
\mathbb{Z} &  p=0, 2, 4, \cdots , 2n \\
0 & otherwise.
\end{cases}
\]
\end{theorem}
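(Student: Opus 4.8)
The plan is to exhibit $\mathbb{CP}^n$ as a CW-complex with exactly one cell in each even dimension $0, 2, 4, \ldots, 2n$ and no cells in odd dimensions, and then read off the homology directly from the cellular chain complex. The key structural fact to establish is that $\mathbb{CP}^n$ is obtained from $\mathbb{CP}^{n-1}$ by attaching a single $2n$-cell.

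To establish this, I would work with homogeneous coordinates. Writing a point of $\mathbb{CP}^n$ as $[z_0 : \cdots : z_n]$, the locus $z_n = 0$ is a copy of $\mathbb{CP}^{n-1}$, while its complement (the affine chart $z_n \neq 0$) is homeomorphic to $\mathbb{C}^n \cong \mathbb{R}^{2n}$. Concretely, I would define a characteristic map $\Phi : D^{2n} \to \mathbb{CP}^n$ on the closed unit disk $D^{2n} \subset \mathbb{C}^n$ by $\Phi(z) = [z_1 : \cdots : z_n : \sqrt{1 - |z|^2}\,]$. This map is a homeomorphism from the interior of $D^{2n}$ onto the affine chart, and on the boundary $\mathbb{S}^{2n-1}$ it restricts to the quotient map $\phi : \mathbb{S}^{2n-1} \to \mathbb{CP}^{n-1}$, $z \mapsto [z_1 : \cdots : z_n]$. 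Thus $\mathbb{CP}^n = \mathbb{CP}^{n-1} \cup_\phi D^{2n}$, and inductively, starting from $\mathbb{CP}^0 = \{ * \}$, we obtain a CW-structure with one cell $e^{2k}$ for each $0 \le k \le n$.

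With this structure in hand, the cellular chain complex $C_\bullet(\mathbb{CP}^n)$ has $C_p = \mathbb{Z}$ when $p$ is even with $0 \le p \le 2n$ and $C_p = 0$ otherwise. Since there are no cells in odd dimensions, every cellular boundary map $\partial_p : C_p \to C_{p-1}$ has either zero domain or zero codomain, hence vanishes. Therefore $H_p(\mathbb{CP}^n) = \ker \partial_p / \mathrm{im}\, \partial_{p+1} = C_p$, which is exactly the claimed value.

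The main obstacle is the first step: verifying carefully that $\Phi$ is continuous, that its restriction to the boundary really is the standard quotient map onto $\mathbb{CP}^{n-1}$, and that the resulting decomposition satisfies the axioms of a CW-structure (in particular that $\mathbb{CP}^{n-1}$ sits inside $\mathbb{CP}^n$ as a subcomplex). Once the cell structure is justified, the homology computation is immediate because of the dimensional gap. As an alternative to the cellular approach, one could argue by induction on $n$ using the long exact homology sequence of the pair $(\mathbb{CP}^n, \mathbb{CP}^{n-1})$ together with the homeomorphism $\mathbb{CP}^n / \mathbb{CP}^{n-1} \cong \mathbb{S}^{2n}$, but the cellular chain complex makes the vanishing of all boundary maps most transparent.
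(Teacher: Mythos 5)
Your proposal is correct, and it is essentially the standard argument: the paper itself offers no proof of this statement (it is quoted from Rotman's book, reference \cite{1}), and the cited source establishes it in just the way you do, via the CW-structure on $\mathbb{CP}^n$ with one cell in each even dimension and the resulting lacunary cellular chain complex in which all boundary maps vanish. Your construction of the characteristic map $\Phi(z) = [z_1 : \cdots : z_n : \sqrt{1-|z|^2}\,]$ and the observation that its boundary restriction is the quotient map $\mathbb{S}^{2n-1} \to \mathbb{CP}^{n-1}$ are exactly the standard verification, so there is nothing to add.
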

\begin{theorem}\cite[Theorem 3.15]{Cohen}. \label{Cohen}
Let $(X,E,\Phi )$ be a CW-complex and $(\tilde{X},p)$ be its universal covering. For each cell $e_\alpha$ of $X$, let an specific characteristic map $\phi_\alpha :I^n \longrightarrow 	X$ ($n=n(\alpha )$) and an specific lift $\tilde{\phi}_\alpha :I^n \longrightarrow \tilde{X}$ of $\phi_\alpha$ be chosen.  Then $\{ \langle \tilde{\phi}_\alpha \rangle \; | \; e_\alpha \in X \}$ is a basis for $C(\tilde{X})$ as a $\mathbb{Z}\pi_1 (X)$-module.
\end{theorem}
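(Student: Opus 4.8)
The plan is to prove this by exhibiting the CW-structure that the universal cover inherits from $X$ and then exploiting the free action of the deck transformation group on the resulting cells. First I would recall the standard fact that a covering space of a CW-complex is again a CW-complex. Concretely, since $\pc$ is a covering projection and the domain $I^n$ of each characteristic map $\phi_\alpha$ is simply connected, the lifting criterion guarantees that each $\phi_\alpha$ admits lifts $\wt{\phi}_\alpha : I^n \lo \wt{X}$, and these lifts serve as characteristic maps for a CW-structure on $\wt{X}$ whose cells are precisely the lifts of the cells of $X$. In particular, every cell of $\wt{X}$ lies over a unique cell $e_\alpha$ of $X$ and shares its dimension.

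Next I would bring in the deck transformation group. Because $\wt{X}$ is the universal (hence simply connected) cover, the group of deck transformations is canonically isomorphic to $\pi_1(X,x)$, and it acts freely and cellularly on $\wt{X}$; this action is exactly what endows the cellular chain complex $C(\wt{X})$ with its structure of a $\Z\pi_1(X)$-module. The key point to establish is that, for each fixed cell $e_\alpha$ of $X$, the group $\pi_1(X)$ acts \emph{simply transitively} on the set of lifts of $\phi_\alpha$. A lift $\wt{\phi}_\alpha$ is completely determined by the image of a single point, say the center of $I^n$, since $I^n$ is simply connected; these images range over the fiber $p^{-1}(\phi_\alpha(\text{center}))$, and for the universal cover the deck group acts simply transitively on every fiber. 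Hence the lifts of each $\phi_\alpha$ constitute a single free orbit under $\pi_1(X)$.

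Finally I would assemble the conclusion dimension by dimension. In each dimension $n$, the cellular chain group $C_n(\wt{X})$ is the free abelian group on the $n$-cells of $\wt{X}$. By the preceding paragraph, every such cell is uniquely of the form $g\cdot \lk \wt{\phi}_\alpha \rg$ for a unique $g\in\pi_1(X)$ and a unique chosen lift $\lk \wt{\phi}_\alpha \rg$ ranging over the $n$-cells $e_\alpha$ of $X$. This is precisely the statement that $C_n(\wt{X})$ is a free $\Z\pi_1(X)$-module with basis $\{\lk \wt{\phi}_\alpha \rg \mid e_\alpha \text{ an } n\text{-cell}\}$, and taking the direct sum over all $n$ yields the claim for $C(\wt{X})$.

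The step I expect to be the main obstacle is the simple transitivity of the $\pi_1(X)$-action on the lifts of a single cell, i.e.\ showing that these lifts form one free orbit. Everything else is bookkeeping about CW-structures on covering spaces, but this is the point at which the hypothesis that $\wt{X}$ is the \emph{universal} cover is genuinely used, and some care is needed to match the set-theoretic bijection between lifts of $\phi_\alpha$ and points of the fiber with the simply transitive action of the deck group on that fiber.
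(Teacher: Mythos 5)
Your proposal is correct, and it is essentially the standard argument: the paper itself gives no proof of this statement (it is quoted verbatim from Cohen's book, Theorem 3.15), and your route --- cells of $\tilde{X}$ are exactly the lifts of cells of $X$, the deck group $\pi_1(X)$ acts simply transitively on the set of lifts of each characteristic map, hence the chosen lifts $\langle \tilde{\phi}_\alpha \rangle$ give a free $\mathbb{Z}\pi_1(X)$-basis of $C(\tilde{X})$ --- is precisely the proof found in that source. The step you single out as the main obstacle is handled correctly: unique lifting on the connected set $I^n$ plus the free, fiber-transitive action of the deck group of the universal cover yields the simple transitivity, and freeness of the action also guarantees that distinct lifts determine distinct cells of $\tilde{X}$.
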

\section{The Capacity of Wedge Sum of  Spheres}
Borsuk in \cite{16} mentioned that the capacity of $\bigvee_k \mathbb{S}^1$ equals to $k+1$ and $C (\mathbb{S}^n )=2$ for $n\geq 1$.  The authors in \cite{Moh} showed that the capacity of a Moore space $M(A,n)$ equals to the number of direct summands of $A$, up to isomorphism. Also, we computed the capacity of the wedge sum of finitely many Moore spaces of different degrees. In particular, we showed that the capacity of $\bigvee_{n\in I} (\vee_{i_n} \mathbb{S}^n)$ equals to $\prod_{n\in I}(i_n +1)$ where $\vee_{i_n} \mathbb{S}^n$ denotes the wedge sum of $i_n$ copies of $\mathbb{S}^n$, $I$ is a finite subset of $\mathbb{N}\setminus \{ 1\}$ and $i_n \in \mathbb{N}$.  As a special case, the capacity of $\mathbb{S}^m \vee \mathbb{S}^n$ $(m,n\geq 2, m\neq n)$ equals to 4.

In this section, we compute the capacity of wedge sum of finitely many spheres of different dimensions (containing spheres of dimension 1). As a consequence, we prove that  two dimensional CW-complexes with free fundamental groups have finite capacities.
%\textbf{Construction}(see \cite{Wall}). Suppose given an $(n-1)$-connected map $\phi :K\longrightarrow X$. If $\phi$ induces an isomorphism of fundamental groups, then $\pi_n (\phi)$ is a $\mathbb{Z}\pi_1 (X)$-module. Using generators  and relations of $\pi_n (\phi )$, one can attach $n$-cells to $K$ to make $\phi$ $n$-connected.
\begin{remark}\cite{Wall}. \label{REM}
Suppose that the fundamental group  $\pi_1 (X)$ of a CW-complex $X$ is a free group of rank $r$. One can find a map $\phi :K\longrightarrow X$ inducing an isomorphism of fundamental groups, where $K$ is wedge sum of $r$ copies of $\mathbb{S}^1$.
\end{remark}
\begin{lemma}\label{tamim}
Suppose that the fundamental group $\pi_1 (X)$ of a CW-complex $X$ is a free group of rank $r$ and  $\phi$ is as in Remark \ref{REM}. Also, assume that $\pi_i (\phi )$ is a free  $\mathbb{Z}\pi_1 (X)$-module of rank $r_i$ for $i\geq 2$ and $H_k (\tilde{X})=0$ for $k>n$. Then $X$ has the homotopy type of wedge sum $\bigvee_{r}\mathbb{S}^1 \vee \big( \bigvee_{i=2}^{n} (\vee_{r_i} \mathbb{S}^i) \big)$ where $\vee_{r_i} \mathbb{S}^i$ denotes the wedge sum of $r_i$  copies of $\mathbb{S}^i$.
\end{lemma}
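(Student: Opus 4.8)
The plan is to turn the map $\phi\colon K=\bigvee_r\mathbb{S}^1\to X$ of Remark \ref{REM} into a homotopy equivalence by attaching cells to $K$ one dimension at a time, arranging that the cells attached in dimension $i$ are $r_i$ trivially attached copies of $\mathbb{S}^i$. Writing $Y=\bigvee_r\mathbb{S}^1\vee\big(\bigvee_{i=2}^n(\vee_{r_i}\mathbb{S}^i)\big)$, the goal is a map $\Phi\colon Y\to X$ extending $\phi$ that is a weak, hence by Theorem \ref{3030} a genuine, homotopy equivalence. I would proceed inductively, producing $(i-1)$-connected maps $\phi_{i-1}\colon L_{i-1}\to X$ with $L_{i-1}=\bigvee_r\mathbb{S}^1\vee\big(\bigvee_{j=2}^{i-1}(\vee_{r_j}\mathbb{S}^j)\big)$, while maintaining the inductive invariant that $(\phi_{i-1})_*\colon H_j(\widetilde{L}_{i-1})\to H_j(\widetilde{X})$ is an isomorphism of $\mathbb{Z}\pi_1(X)$-modules for all $j\le i-1$. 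The base case $\phi_1=\phi$ holds since $\phi$ is an isomorphism on $\pi_1$ and on $H_{\le 1}$ of universal covers.

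The inductive step has two ingredients. First I would identify the obstruction module: passing to universal covers (Theorem \ref{Hat1}) and applying the relative Hurewicz theorem (Theorem \ref{Hat}) to the $(i-1)$-connected pair $(\widetilde{M}_{i-1},\widetilde{L}_{i-1})$, where $M_{i-1}$ is the mapping cylinder of $\phi_{i-1}$, gives a $\mathbb{Z}\pi_1(X)$-isomorphism $\pi_i(\phi_{i-1})\cong H_i(\widetilde{M}_{i-1},\widetilde{L}_{i-1})$; the homology exact sequence of the pair, together with the inductive isomorphism in degree $i-1$, collapses this to $\pi_i(\phi_{i-1})\cong H_i(\widetilde{X})$. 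Interpreting the freeness hypothesis at this $(i-1)$-connected stage map, as in Wall's conditions $\mathcal{F}_i$, this module is free of rank $r_i$. Second, I would realize a basis geometrically: each basis element should be promoted to an honest map $\mathbb{S}^i\to X$, i.e. an element of $\pi_i(X)\cong\pi_i(\widetilde{X})$ whose Hurewicz image is the corresponding generator of $H_i(\widetilde{X})$, so that wedging on $r_i$ spheres and extending $\phi_{i-1}$ over them yields an $i$-connected $\phi_i\colon L_i\to X$ that is an isomorphism on $H_i(\widetilde{\cdot})$, preserving the invariant.

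The main obstacle is precisely this second ingredient. A priori the generators of $\pi_i(\phi_{i-1})$ live in the relative group, and their images under the boundary $\partial\colon\pi_i(\phi_{i-1})\to\pi_{i-1}(\widetilde{L}_{i-1})$ measure exactly the failure of the attached $i$-cells to be wedge summands; I must show that $\partial$ can be made to vanish on a basis, so that the basis lifts to $\pi_i(X)$ and the cells attach along null-homotopic maps. This is where both the freeness of $\pi_i(\phi_{i-1})$ and the coefficient ring are essential: since $\pi_1(X)$ is free, $\mathbb{Z}\pi_1(X)$ is a free ideal ring, over which submodules of free modules are free and a free homology module splits off the cellular chain complex of $\widetilde{X}$ as a direct summand with zero differential. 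I expect the crux of the write-up to be converting this algebraic splitting into a geometric one, i.e. choosing the spheres (and if necessary retroactively adjusting the lower-dimensional attaching data) so that the differential of the model is trivial and $L_i$ is literally a wedge of spheres.

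Finally, the recursion terminates at dimension $n$: since $H_k(\widetilde{X})=0$ for $k>n$, the obstruction module $H_i(\widetilde{X})$ vanishes for $i>n$, so no further cells are needed, and $\phi_n\colon Y=L_n\to X$ induces an isomorphism on $H_j(\widetilde{\cdot})$ for every $j$ (for $j\le n$ by construction, and for $j>n$ because both sides vanish). As $\widetilde{Y}$ and $\widetilde{X}$ are simply connected, Theorem \ref{3031} shows the lift $\widetilde{\phi_n}\colon\widetilde{Y}\to\widetilde{X}$ is a homotopy equivalence; hence $\phi_n$ induces isomorphisms on all homotopy groups, and Theorem \ref{3030} yields that $\phi_n\colon Y\to X$ is a homotopy equivalence, which is the desired conclusion.
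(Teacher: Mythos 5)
Your overall route is the same as the paper's: Wall's inductive cell attachment starting from $\phi\colon\bigvee_r\mathbb{S}^1\to X$, identification of the obstruction module $\pi_i(\phi_{i-1})\cong H_i(\widetilde{X})$ via Theorems \ref{Hat1} and \ref{Hat}, termination at dimension $n$ because $H_k(\widetilde{X})=0$ for $k>n$, and Theorems \ref{3031}/\ref{3030} to conclude. The problem is the step you yourself call ``the main obstacle'': you never prove that the boundary $\partial\colon\pi_i(\phi_{i-1})\to\pi_{i-1}(L_{i-1})$ vanishes, and the mechanism you propose cannot prove it. First note that ``$\partial$ vanishes on a basis'' is the same as ``$\partial=0$'': the elements with trivial boundary form the submodule $\ker\partial=\mathrm{im}\bigl(\pi_i(X)\to\pi_i(\phi_{i-1})\bigr)$, so it contains a generating set only if it is the whole module; equivalently one needs $(\phi_{i-1})_*\colon\pi_{i-1}(L_{i-1})\to\pi_{i-1}(X)$ to be injective. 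That is a homotopy-theoretic condition, whereas your proposed justification (free ideal ring, splitting $C_*(\widetilde{X})$ off as a complex with zero differential) uses only the $\mathbb{Z}\pi_1(X)$-chain-level data of $\widetilde{X}$. Indeed, by your own inductive invariant the homology boundary $H_i(\widetilde{M}_{i-1},\widetilde{L}_{i-1})\to H_{i-1}(\widetilde{L}_{i-1})$ is already zero, so $\mathrm{im}\,\partial$ lies in the kernel of the Hurewicz map $\pi_{i-1}(L_{i-1})\cong\pi_{i-1}(\widetilde{L}_{i-1})\to H_{i-1}(\widetilde{L}_{i-1})$; the potential obstruction consists of Whitehead products and compositions with Hopf maps, which no chain-level splitting can detect.

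This is a genuine gap, not a technicality, because under your stage-wise reinterpretation of the freeness hypothesis the statement becomes false: $\mathbb{CP}^2$ carries exactly the same homological data as $\mathbb{S}^2\vee\mathbb{S}^4$. Run your induction on $X=\mathbb{CP}^2$ (here $r=0$, $K=\ast$): stage $2$ produces $L_2=\mathbb{S}^2$ mapped in as $\mathbb{CP}^1\subset\mathbb{CP}^2$; the stage-wise obstruction modules are free of ranks $1,0,1$ in degrees $2,3,4$; homology vanishes above dimension $4$; and your inductive invariant (isomorphism on $H_j$ for $j\le 3$) holds. Yet the boundary $\partial\colon\pi_4(\mathbb{CP}^2,\mathbb{CP}^1)\cong\mathbb{Z}\to\pi_3(\mathbb{S}^2)=\mathbb{Z}\eta$ is an isomorphism (exact sequence of the pair, using $\pi_3(\mathbb{CP}^2)=\pi_4(\mathbb{CP}^2)=0$), so the $4$-cell can never be attached trivially, and indeed $\mathbb{CP}^2\not\simeq\mathbb{S}^2\vee\mathbb{S}^4$ --- as Section 4 of this very paper shows. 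Hence any argument that, like yours, uses only freeness of the homological modules proves a false statement; whatever forces the attaching maps to be trivial must come from the literal hypothesis that the relative homotopy groups $\pi_i(\phi)$ are free (a condition $\mathbb{CP}^2$ violates, e.g. $\pi_6(\mathbb{CP}^2)\cong\mathbb{Z}_2$), and your proof never uses that input. To be fair, the paper's own proof dismisses this exact point with the single word ``necessarily'' and no argument, so you have correctly located the crux of the lemma; but your proposal, as written, does not close it.
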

\begin{proof}
It is easy to see that $\phi$ is 1-connected ($\phi$ induces an isomorphism of fundamental groups). Using the construction introduced in \cite[p. 59]{Wall}  and by induction on $i$, since by the hypothesis $\pi_i (\phi )$ is a free $\mathbb{Z}\pi_1 (X)$-module, we can attach finitely many $i$-cells to $K$, obtaining $K^{(i)}$ say, necessarily with trivial attaching maps to make $\phi$ $i$-connected. By Theorem \ref{3030}, $K^{(n)}$ is homotopy equivalent to $X$.  By hypothesis $C_k (\tilde{X})=H_k (\tilde{X})=0$ for $k>n$ ($B_k (\tilde{X})=0$ because of the shape of $\tilde{X}$) and hence by Theorem \ref{Cohen}, $C_k (X)=0$ for $k>n$. Thus  $K^{(n)}$ is the wedge sum $\bigvee_{r}\mathbb{S}^1 \vee \big( \bigvee_{i=2}^{n} (\vee_{r_i} \mathbb{S}^i) \big)$.
\end{proof}
Note that Lemma \ref{tamim} is a generalization of the following result:
\begin{theorem}\cite[Proposition 3.3]{Wall}. \label{701}
If a CW-complex $X$ satisfies $\mathcal{D}_2$ and $\mathcal{F}_2$ and $\pi_1 (X)$ is a  free group, then $X$ has the homotopy type of a finite bouquet of 1-spheres and 2-shperes.
\end{theorem}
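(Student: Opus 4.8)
The plan is to deduce the statement from Lemma \ref{tamim} taken with $n=2$, so the entire task reduces to verifying that lemma's hypotheses. First I would record that $\mathcal{F}_2$ forces $\pi_1(X)$ to be finitely presented, hence finitely generated; being free, it is then free of some finite rank $r$. Remark \ref{REM} supplies a map $\phi:K\to X$ from $K=\bigvee_r \mathbb{S}^1$ inducing an isomorphism of fundamental groups, so $\phi$ is $1$-connected, and the homological half of $\mathcal{D}_2$ gives $H_k(\tilde{X})=0$ for $k>2$ at once. Thus everything comes down to showing that $\pi_2(\phi)$ is a \emph{finitely generated free} $\mathbb{Z}\pi_1(X)$-module; the construction of Lemma \ref{tamim} with $n=2$ uses only this together with $H_k(\tilde{X})=0$ for $k>2$ (the freeness requirement on $\pi_i(\phi)$ for $i\geq 3$ being subsumed by the homological vanishing, since $K^{(2)}$ is $2$-dimensional).

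Finite generation is immediate from the second clause of $\mathcal{F}_2$. To understand the module structure I would pass to the mapping cylinder $M$ of $\phi$ and exploit that $(M,K)$ is $1$-connected: the relative Hurewicz theorem (Theorem \ref{Hat}) identifies $\pi_2(\phi)=\pi_2(M,K)$ with $H_2(\tilde{M},\tilde{K})$ as $\mathbb{Z}\pi_1(X)$-modules. Since $K$ is a wedge of circles, $\tilde{K}$ is a tree and hence contractible, so $H_2(\tilde{M},\tilde{K})\cong H_2(\tilde{M})=H_2(\tilde{X})$. Therefore $\pi_2(\phi)\cong H_2(\tilde{X})$, which I will analyse through the cellular chain complex $C_\ast=C_\ast(\tilde{X})$; by Theorem \ref{Cohen} each $C_i$ is $\mathbb{Z}\pi_1(X)$-free on the cells of $X$, so I may write $Z_2=\ker(\partial_2)$, $B_2=\operatorname{im}(\partial_3)$, and $\pi_2(\phi)\cong Z_2/B_2$.

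The crux is promoting this module from finitely generated to free, and this is where the cohomological half of $\mathcal{D}_2$ enters. A direct computation with the cochain complex $\operatorname{Hom}_{\mathbb{Z}\pi_1(X)}(C_\ast,\mathcal{B})$ identifies $H^3(X;\mathcal{B})$ with $\operatorname{Ext}^1_{\mathbb{Z}\pi_1(X)}(C_2/B_2,\mathcal{B})$ for every coefficient bundle $\mathcal{B}$; since $\mathcal{D}_2$ makes all of these vanish, $C_2/B_2$ is projective. Using $H_1(\tilde{X})=0$ together with the freeness of the augmentation ideal of the free group ring $\mathbb{Z}\pi_1(X)$, the submodule $B_1=\operatorname{im}(\partial_2)$ is projective, so the short exact sequence $0\to Z_2/B_2\to C_2/B_2\to C_2/Z_2\cong B_1\to 0$ splits and exhibits $\pi_2(\phi)\cong Z_2/B_2$ as a direct summand of the projective module $C_2/B_2$. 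Hence $\pi_2(\phi)$ is finitely generated projective. Finally, because $\pi_1(X)$ is free, finitely generated projective $\mathbb{Z}\pi_1(X)$-modules are free (the reduced projective class group $\tilde{K}_0(\mathbb{Z}\pi_1(X))$ vanishes, and over such group rings stably free modules are free), so $\pi_2(\phi)$ is free of some finite rank $r_2$.

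With $\pi_1(X)$ free of rank $r$, $\pi_2(\phi)$ free of rank $r_2$, and $H_k(\tilde{X})=0$ for $k>2$, Lemma \ref{tamim} with $n=2$ now yields $X\simeq \bigvee_r \mathbb{S}^1\vee (\vee_{r_2}\mathbb{S}^2)$, a finite bouquet of $1$- and $2$-spheres. I expect the main obstacle to be the projectivity step: extracting projectivity of $\pi_2(\phi)$ from the vanishing of $H^3(X;\mathcal{B})$ for all local systems, and then invoking the special feature of free $\pi_1(X)$ that makes finitely generated projectives free. Everything else is bookkeeping with the Hurewicz theorem and an appeal to Lemma \ref{tamim}.
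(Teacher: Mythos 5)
Your proof is correct, but it cannot be compared step-by-step with the paper's argument for a simple reason: the paper gives no proof of Theorem \ref{701} at all. The result is imported from Wall (his Proposition 3.3), and the surrounding text only remarks that Lemma \ref{tamim} is a ``generalization'' of it. What you have written is exactly the bridge that this remark quietly presupposes and that the paper never supplies: Lemma \ref{tamim} requires $\pi_2(\phi)$ to be a finitely generated \emph{free} $\mathbb{Z}\pi_1(X)$-module, whereas $\mathcal{F}_2$ only yields finite generation, so the whole content of deducing Theorem \ref{701} from the lemma lies in promoting $\pi_2(\phi)\cong H_2(\tilde{X})\cong Z_2/B_2$ from finitely generated to free. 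Your chain of deductions for that promotion is sound: the homological half of $\mathcal{D}_2$ makes $\cdots\to C_3\to C_2\to C_2/B_2\to 0$ a free resolution, which identifies $\operatorname{Ext}^1_{\mathbb{Z}\pi_1(X)}(C_2/B_2,\mathcal{B})$ with $H^3(X;\mathcal{B})$ and hence gives projectivity of $C_2/B_2$; the splitting against the projective module $B_1$ exhibits $Z_2/B_2$ as a summand; and Bass's theorem (finitely generated projective modules over the integral group ring of a free group are free) finishes. This is in substance Wall's own argument, so in effect you have reconstructed the cited proof and then routed its cell-attachment endgame through the paper's Lemma \ref{tamim}; the paper's citation is more economical, while your version makes the text self-contained modulo Bass's theorem and actually justifies the claim that Lemma \ref{tamim} generalizes Wall's proposition. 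Two small points deserve explicit mention, though neither is a substantive gap. First, Lemma \ref{tamim} as literally stated hypothesizes freeness of $\pi_i(\phi)$ for \emph{all} $i\geq 2$; your observation that the proof with $n=2$ consumes only $\pi_2(\phi)$ (cells are attached only in dimension $2$, and $H_k(\tilde{X})=0$ for $k>2$ together with $\dim K^{(2)}=2$ kills the higher relative homotopy groups) is correct, but it means you are invoking the lemma's proof rather than its statement, and that should be said openly. Second, your projectivity of $B_1$ via the augmentation ideal tacitly assumes $X$ has a single $0$-cell; either normalize $X$ up to homotopy to have one $0$-cell, or note that $\ker\big((\mathbb{Z}\pi_1(X))^m\to\mathbb{Z}\big)\cong I\oplus(\mathbb{Z}\pi_1(X))^{m-1}$ is still free.
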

\begin{theorem}\label{ASL}
The capacity of $\bigvee_{n\in I} (\vee_{i_n} \mathbb{S}^n)$ equals to $\prod_{n\in I}(i_n +1)$ where $\vee_{i_n} \mathbb{S}^n$ denotes the wedge sum of $i_n$ copies of $\mathbb{S}^n$, $I$ is a finite subset of $\mathbb{N}$ and $i_n \in \mathbb{N}$.
\end{theorem}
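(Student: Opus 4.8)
The plan is to prove $C(X) = \prod_{n\in I}(i_n+1)$ for $X = \bigvee_{n\in I}(\vee_{i_n}\mathbb{S}^n)$ by identifying, up to homotopy, exactly which CW-complexes are homotopy dominated by $X$: I claim they are precisely the sub-wedges $\bigvee_{n\in I}(\vee_{j_n}\mathbb{S}^n)$ with $0 \leq j_n \leq i_n$. For the lower bound I would first exhibit these $\prod_{n\in I}(i_n+1)$ complexes and verify both properties. Each such sub-wedge is a retract of $X$ (collapse the superfluous spheres to the basepoint and then include), hence is dominated by $X$; and they are pairwise of distinct homotopy type because their reduced homology groups $\tilde H_n \cong \mathbb{Z}^{j_n}$ (for every $n$, using that $\pi_1$ is free of rank $j_1$ so $H_1 \cong \mathbb{Z}^{j_1}$) recover the tuple $(j_n)_{n\in I}$. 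Counting admissible tuples yields exactly $\prod_{n\in I}(i_n+1)$ homotopy types.

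The substance is the reverse inclusion: every CW-complex $Y$ dominated by $X$ has the homotopy type of such a sub-wedge. First, $\pi_1(Y)$ is a retract of $\pi_1(X)$, which is free of rank $i_1$; a retract of a free group embeds in it, hence is free by Nielsen--Schreier, and passing to abelianizations shows its rank $r$ satisfies $r \leq i_1$. Next, since $Y$ is dominated by the finite CW-complex $X$ of dimension $N := \max I$, Theorem \ref{700} shows $Y$ satisfies $\mathcal{D}_N$ and $\mathcal{F}_N$; in particular $H_k(\tilde Y) = 0$ for $k > N$, and for the map $\phi : \bigvee_r \mathbb{S}^1 \to Y$ of Remark \ref{REM} each relative module $\pi_i(\phi)$ is finitely generated over $\mathbb{Z}\pi_1(Y)$. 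I would then feed this into Lemma \ref{tamim} to conclude $Y \simeq \bigvee_r\mathbb{S}^1 \vee \big(\bigvee_{i=2}^N (\vee_{r_i}\mathbb{S}^i)\big)$ for suitable $r_i$.

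Lemma \ref{tamim}, however, requires the modules $\pi_i(\phi)$ to be \emph{free} over $\mathbb{Z}\pi_1(Y) \cong \mathbb{Z}[F_r]$, and establishing this --- rather than mere finite generation --- is where I expect the real difficulty to lie. Finite domination by $X$ together with condition $\mathcal{D}_N$ forces these relative modules to be finitely generated projective $\mathbb{Z}[F_r]$-modules; one then invokes the special behaviour of the integral group ring of a free group, namely that its reduced projective class group $\tilde{K}_0(\mathbb{Z}[F_r])$ vanishes and that finitely generated projective $\mathbb{Z}[F_r]$-modules are in fact free, to upgrade projectivity to freeness. This is the hinge of the argument, since the wedge decomposition produced by the construction of \cite[p.~59]{Wall} depends on being able to attach the cells with trivial attaching maps, which is exactly what freeness of the $\pi_i(\phi)$ guarantees.

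Finally I would pin down the ranks. Since homology is functorial and domination yields retractions, for each $n$ the group $H_n(Y)$ is a retract of $H_n(X) \cong \mathbb{Z}^{i_n}$, hence free abelian of rank at most $i_n$. Combined with the computation $H_n(Y) \cong \mathbb{Z}^{r_n}$ read off from the wedge decomposition (and $H_1(Y) \cong \mathbb{Z}^{r}$), this forces $r \leq i_1$, $r_i \leq i_i$, and $r_i = 0$ whenever $i \notin I$. Therefore $Y$ is homotopy equivalent to $\bigvee_{n\in I}(\vee_{j_n}\mathbb{S}^n)$ with $0 \leq j_n \leq i_n$, which together with the lower bound gives $C(X) = \prod_{n\in I}(i_n+1)$.
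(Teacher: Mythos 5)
Your proposal has the same overall skeleton as the paper's proof: both reduce the theorem to showing that any complex dominated by $X$ is homotopy equivalent to a sub-wedge, both route through Remark \ref{REM} and Lemma \ref{tamim}, and both correctly identify the crux as proving that the successive relative modules $\pi_i(\phi)$ are \emph{free} over $\mathbb{Z}\pi_1(Y)\cong\mathbb{Z}[F_r]$ (your explicit lower bound and final rank count are fine, and in fact more carefully spelled out than in the paper, which leaves that side implicit). The divergence is in how freeness is obtained. The paper gets it directly and geometrically: using Theorems \ref{Hat1} and \ref{Hat} it identifies $\pi_n(\phi)\cong H_n(\tilde{A},\tilde{K})\cong C_n(\tilde{A})$ (boundaries in $\tilde{A}$ vanish because the universal cover of $X$ is a tree with wedges of spheres at its vertices), and then Cohen's basis theorem (Theorem \ref{Cohen}) delivers freeness \emph{and} the rank bound $j_n\le i_n$ in one stroke, with no $K$-theory. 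You instead propose projectivity plus Bass's theorem that finitely generated projective $\mathbb{Z}[F_r]$-modules are free --- which is indeed the mechanism behind Wall's own Proposition 3.3 (Theorem \ref{701}), so the route is legitimate in spirit.

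The gap is your assertion that ``finite domination by $X$ together with condition $\mathcal{D}_N$ forces these relative modules to be finitely generated projective.'' As a general principle this is false in intermediate dimensions: Wall's finiteness theory yields projectivity only for the \emph{top} module $\pi_N(\phi)$ of an $(N-1)$-connected map from a finite $(N-1)$-complex, while in dimensions $2\le i<N$ the conditions $\mathcal{F}_i$ give finite generation only. Concretely, the Moore space $M(\mathbb{Z}_2,2)=\mathbb{S}^2\cup_2 e^3$ is a finite complex (hence finitely dominated, satisfying $\mathcal{D}_3$ and $\mathcal{F}_3$) with fundamental group free of rank $0$, yet for $\phi:\{\ast\}\to M(\mathbb{Z}_2,2)$ the module $\pi_2(\phi)\cong\mathbb{Z}_2$ is not projective over $\mathbb{Z}$. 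So projectivity must come from the specific structure of $X$, not from general finiteness theory, and your proposal never supplies that argument. The repair is: lift a domination $d:X\to Y$, $u:Y\to X$, $du\simeq 1_Y$ to universal covers, so that $H_i(\tilde{Y})$ becomes a $\mathbb{Z}\pi_1(Y)$-module direct summand of $H_i(\tilde{X})$ restricted along the injection $u_*$; since $H_i(\tilde{X})\cong(\mathbb{Z}\pi_1(X))^{i_n}$ and a group ring is free as a module over the group ring of any subgroup, $H_i(\tilde{Y})$ is projective. One must also verify inductively that $\pi_i(\phi_{i-1})\cong H_i(\tilde{Y})$ for the successively constructed maps (this is what connects projectivity of $H_i(\tilde{Y})$ to the hypotheses of Lemma \ref{tamim}); only then does Bass's theorem close the argument. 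Note that once you have this summand-of-a-free-module computation you have essentially reconstructed the paper's proof, which reads off freeness and $j_n\le i_n$ from Theorem \ref{Cohen} and never needs Bass at all.
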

\begin{proof}
If  $I\subset \mathbb{N}\setminus \{1 \}$, the result has been proved by the authors in \cite{Moh}. Hence we can suppose that $X=\bigvee_{r}\mathbb{S}^1 \vee \big( \bigvee_{n\in I\setminus \{ 1\}} (\vee_{i_n} \mathbb{S}^n) \big)$. Assume that $A$ is homotopy dominated by $X$ and $\pi_1 (A)$ is of rank $s$ where $0\leq s\leq r$. By Remark \ref{REM}, one can find a map $\phi :K \longrightarrow A$ induces an isomorphism of fundamental groups, where $K$ is wedge sum of $s$ copies of $\mathbb{S}^1$ . By Theorems \ref{Hat1} and \ref{Hat}, we have $\pi_n (\phi ) \cong \pi_n (\tilde{\phi}) \cong H_n (\tilde{A},\tilde{K})\cong  \frac{C_n (\tilde{A})}{B_n (\tilde{A})}$. Note that $B_n (\tilde{A})\leqslant B_n (\tilde{X})$ and $B_n (\tilde{X})=0$ since $\tilde{X}$ is a tree with a $ \bigvee_{n\in I\setminus \{ 1\}} (\vee_{i_n} \mathbb{S}^n)$ on each of its vertex, hence $B_n (\tilde{A})=0$. This shows that $\pi_n (\phi )\cong C_n (\tilde{A})$ as $\mathbb{Z}\pi_1 (A)$-module.  By Theorem \ref{Cohen}, the rank of $C_n (\tilde{A})$ as $\mathbb{Z}\pi_1 (A)$-module equals to the rank of $C_n (A)$ as $\mathbb{Z}$-module (which is a submodule of $C_n (X)$). Therefore $\pi_n (\phi )$ is a free $\mathbb{Z}\pi_1 (A)$-module of rank $j_n$ ($0\leq j_n \leq i_n$). Hence by Lemma \ref{tamim}, $A$ is homotopy equivalent to $\bigvee_{s}\mathbb{S}^1 \vee \big( \bigvee_{n\in I\setminus \{ 1\}} (\vee_{j_n} \mathbb{S}^n) \big)$.
\end{proof}
\begin{corollary}
The capacity of $\mathbb{S}^1 \vee \mathbb{S}^n$ equals to 4,  for every $n\geq 2$.
\end{corollary}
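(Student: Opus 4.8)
The plan is to recognize this as the immediate specialization of Theorem \ref{ASL}. Writing $X = \mathbb{S}^1 \vee \mathbb{S}^n$, I take the finite index set $I = \{1, n\} \subset \mathbb{N}$ with multiplicities $i_1 = 1$ and $i_n = 1$; since $n \geq 2$ we have $n \neq 1$, so these are genuinely two distinct dimensions and the wedge is exactly $\vee_{i_1}\mathbb{S}^1 \vee \vee_{i_n}\mathbb{S}^n$ in the notation of the theorem. Then Theorem \ref{ASL} gives the capacity directly as $\prod_{m\in I}(i_m + 1) = (i_1 + 1)(i_n + 1) = (1+1)(1+1) = 4$.

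Before invoking the theorem I would check that the hypotheses are met, namely that $1 \in I$ is permitted; this is precisely the extension that Theorem \ref{ASL} provides over the earlier computation in \cite{Moh}, which required $I \subset \mathbb{N}\setminus\{1\}$. Here $r = i_1 = 1$, so the fundamental group $\pi_1(X)$ is free of rank $1$, and the one-dimensional part of the wedge is $\bigvee_1 \mathbb{S}^1 = \mathbb{S}^1$, exactly the form treated in the proof of Theorem \ref{ASL}.

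If a more self-contained argument were wanted, I would instead unfold the classification obtained inside the proof of Theorem \ref{ASL}: any $A$ homotopy dominated by $X$ is homotopy equivalent to $\bigvee_{s}\mathbb{S}^1 \vee (\vee_{j_n}\mathbb{S}^n)$ with $0 \leq s \leq 1$ and $0 \leq j_n \leq 1$. Enumerating the four choices of the pair $(s, j_n)$ yields the point (contractible case), $\mathbb{S}^1$, $\mathbb{S}^n$, and $\mathbb{S}^1 \vee \mathbb{S}^n$, which are pairwise non-homotopy-equivalent (distinguished by their fundamental groups and homology), giving exactly $4$ homotopy types. I do not anticipate any genuine obstacle here, since every nontrivial step is already discharged in the proof of Theorem \ref{ASL}; the only point requiring a moment of care is confirming that the four listed spaces are mutually non-equivalent, which is routine via the homotopy and homology invariants already available.

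\begin{proof}
Taking $I = \{1, n\}$ with $i_1 = i_n = 1$ in Theorem \ref{ASL}, the space $\mathbb{S}^1 \vee \mathbb{S}^n$ is of the required form (note $n \neq 1$ since $n \geq 2$), and its capacity equals $\prod_{m\in I}(i_m + 1) = (1+1)(1+1) = 4$.
\end{proof}
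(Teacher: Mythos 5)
Your proof is correct and matches the paper's intent exactly: the corollary is stated in the paper as an immediate specialization of Theorem \ref{ASL} (with no separate argument given), and your instantiation $I=\{1,n\}$, $i_1=i_n=1$, yielding capacity $(1+1)(1+1)=4$, is precisely that specialization. The additional enumeration of the four homotopy types (point, $\mathbb{S}^1$, $\mathbb{S}^n$, $\mathbb{S}^1\vee\mathbb{S}^n$) is a harmless but unnecessary elaboration.
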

\begin{remark}
 Kolodziejczyk in \cite{18} asked the following question:
\begin{center}
 ``Does every polyhedron P with the abelian fundamental group $\pi_1 (P)$ dominate only finitely many different homotopy types?''
\end{center}
She proved that two large classes of polyhedra, polyhedra $Q$ with finite fundamental groups $\pi_1 (Q)$, and polyhedra $P$ with abelian fundamental groups and finitely generated homology groups $H_i (\tilde{P})$  $(i\geq 2)$,  have finite capacities, where $\tilde{P}$ is the universal covering of $P$ (see \cite{14},\cite{18}). Note that the wedge sum $\mathbb{S}^1 \vee \mathbb{S}^2$ is a simple example of a polyhedron $P$ with infinite abelian fundamental group $\pi_1 (P )$ and infinitely generated homology group $H_2 (\tilde{P}; \mathbb{Z})$ which  is not wedge sum of Moore spaces. So its capacity can not determined  by neither the results of Kolodziejczyk nor the results of \cite{Moh}.
\end{remark}
\begin{corollary}
Every 2-dimensional CW-complex $X$ with free fundamental group $\pi_1 (X)$ has finite capacity. Moreover, if the ranks of $\pi_1 (X)$ and $H_2 (X)$ are $r$ and $s$, respectively, then the capacity of $X$ equals to $(r+1)\times (s+1)$.
\end{corollary}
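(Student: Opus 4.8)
The plan is to show that $X$ is homotopy equivalent to a wedge of $r$ circles and $s$ two-spheres and then read off the capacity from Theorem \ref{ASL}. Write $r$ for the rank of $\pi_1(X)$ and $s$ for the rank of $H_2(X)$. Once the homotopy equivalence $X \simeq \bigvee_r \mathbb{S}^1 \vee (\vee_s \mathbb{S}^2)$ is established, Theorem \ref{ASL} applied with $I=\{1,2\}$, $i_1=r$ and $i_2=s$ gives $C(X)=(r+1)(s+1)$ immediately, and in particular the capacity is finite.

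To obtain the homotopy classification I would apply Lemma \ref{tamim} with $n=2$. Since $X$ is a finite $2$-dimensional CW-complex, its universal cover $\tilde X$ is also $2$-dimensional, so $H_k(\tilde X)=0$ for $k>2$ with no further argument. Remark \ref{REM} provides a map $\phi:K\to X$ with $K=\vee_r\mathbb{S}^1$ inducing an isomorphism on fundamental groups, and just as in the proof of Theorem \ref{ASL}, Theorems \ref{Hat1} and \ref{Hat} give $\pi_2(\phi)\cong\pi_2(\tilde\phi)\cong H_2(\tilde X,\tilde K)$. As $\tilde K$ is a tree and $X$ has no cells above dimension $2$, this identifies $\pi_2(\phi)$ with $Z_2(\tilde X)=\ker\big(\partial_2:C_2(\tilde X)\to C_1(\tilde X)\big)$, a submodule of the free $\mathbb{Z}\pi_1(X)$-module $C_2(\tilde X)$ of Theorem \ref{Cohen}.

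The step I expect to be the main obstacle is verifying the \emph{freeness} of $\pi_2(\phi)$ over $\mathbb{Z}\pi_1(X)$ that Lemma \ref{tamim} demands: unlike the situation in Theorem \ref{ASL}, where the boundary maps vanish and $\pi_n(\phi)$ is the whole chain module, here $\partial_2$ need not vanish and $\pi_2(\phi)$ is only a submodule of a free module. The way around this is to use that $\pi_1(X)$ is free, so that $\mathbb{Z}\pi_1(X)$ is the integral group ring of a free group and hence a free ideal ring; over such a ring, by Cohn's theorem, every submodule of a free module is free. Thus $\pi_2(\phi)$ is a free $\mathbb{Z}\pi_1(X)$-module of some finite rank $r_2$, and Lemma \ref{tamim} yields $X\simeq \bigvee_r\mathbb{S}^1\vee(\vee_{r_2}\mathbb{S}^2)$. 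Alternatively, one can bypass this module-theoretic point entirely: being a finite $2$-complex, $X$ is dominated by itself and so satisfies $\mathcal{D}_2$ and $\mathcal{F}_2$ by Theorem \ref{700}; since $\pi_1(X)$ is free, Theorem \ref{701} gives directly that $X$ is a finite bouquet of $1$-spheres and $2$-spheres.

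It remains to identify $r_2$ with $s$. Computing the homology of $\bigvee_r\mathbb{S}^1\vee(\vee_{r_2}\mathbb{S}^2)$ gives $H_2(X)\cong\mathbb{Z}^{r_2}$, so $r_2=s$, while the number of circles is $r$ by the construction of $\phi$. This completes the homotopy equivalence $X\simeq\bigvee_r\mathbb{S}^1\vee(\vee_s\mathbb{S}^2)$, and Theorem \ref{ASL} then delivers $C(X)=(r+1)(s+1)$.
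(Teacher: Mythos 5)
Your primary route contains a genuine error, though the fallback you mention in passing rescues the argument. The error is the claim that $\mathbb{Z}\pi_1(X)$ is a free ideal ring. Cohn's theorem makes the group ring of a free group a fir only when the coefficients form a field (or division ring); over $\mathbb{Z}$ this fails. Concretely, for $\pi_1(X)\cong\mathbb{Z}=\langle t\rangle$ the group ring is $\mathbb{Z}[t,t^{-1}]$, and the ideal $(2,\,t-1)$ is a finitely generated submodule of the free module $\mathbb{Z}[t,t^{-1}]$ that is not free (it is not principal, and a free ideal of a commutative domain must be principal); it is not even projective, as one sees by localizing at the maximal ideal $(2,\,t-1)$, where it becomes the maximal ideal of a $2$-dimensional regular local ring. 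So the statement ``every submodule of a free $\mathbb{Z}\pi_1(X)$-module is free'' is false, and your verification of the freeness hypothesis of Lemma \ref{tamim} collapses. The module $\pi_2(\phi)\cong Z_2(\tilde X)$ \emph{is} free, but for a subtler reason: since $\pi_1(X)$ is a free group, $\mathbb{Z}$ has projective dimension $1$ over $\mathbb{Z}\pi_1(X)$, so in the exact sequence $0\to Z_2(\tilde X)\to C_2(\tilde X)\to C_1(\tilde X)\to C_0(\tilde X)\to\mathbb{Z}\to 0$ the successive kernels split off as direct summands of free modules, making $Z_2(\tilde X)$ finitely generated projective; then one invokes Bass's theorem that finitely generated projective modules over integral group rings of free groups are free. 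This two-step argument, not a fir argument, is essentially what Wall's result quoted as Theorem \ref{701} encapsulates.

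Fortunately, your alternative route is sound and is in fact the paper's own proof: $X$ is a finite $2$-complex, hence dominated by itself, hence satisfies $\mathcal{D}_2$ and $\mathcal{F}_2$ by Theorem \ref{700}; since $\pi_1(X)$ is free, Theorem \ref{701} gives $X\simeq\bigvee_{a}\mathbb{S}^1\vee(\vee_{b}\mathbb{S}^2)$ for some $a,b$. Your closing identification step is then both needed and correct (the paper leaves it implicit): $a=r$ since $\pi_1(X)$ has rank $r$, and $b=s$ since $H_2(X)$ is free abelian of rank $s$ (it is a subgroup of $C_2(X)$, there being no $3$-cells), after which Theorem \ref{ASL} gives $C(X)=(r+1)(s+1)$. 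Promote this route to the main argument and drop the fir claim.
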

\begin{proof}
It can be concluded from Theorems \ref{700}, \ref{701} and \ref{ASL}.
\end{proof}
\section{A Negative Answer to a Question of Borsuk}
Borsuk in \cite{So} stated some questions concerning properties of the capacity of
compacta. One of them is as follow:
\begin{center}
  Is the capacity $C(A)$ determined by the homology properties of $A$?
\end{center}
In this section, we give a negative answer to this question, in general.  However, the authors in \cite{Moh}  showed that the capacity of Moore spaces determined by their homology properties. In fact, they proved that the capacity of a Moore space $M(A,n)$ equals to the number of direct summands of $A$, up to isomorphism.
\begin{lemma}\cite[Example 2.43]{3}.\label{0}
Let $X$ be the Eilenberg-MacLane space $K(\mathbb{Z}_m ,1)$. Then $H_n (X)$ is $\mathbb{Z}_m$ for odd $n$ and $0$ for even $n>0$.
\end{lemma}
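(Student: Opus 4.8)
The plan is to reduce the statement to a purely algebraic computation of the homology of the group ring $\Lambda := \mathbb{Z}[\mathbb{Z}_m]$ and to carry it out with an explicit periodic free resolution. Since $X = K(\mathbb{Z}_m,1)$, its fundamental group is $\mathbb{Z}_m$ and its universal cover $\tilde{X}$ has $\pi_1(\tilde{X}) = 0$ and, by Theorem \ref{Hat1}, $\pi_n(\tilde{X}) \cong \pi_n(X) = 0$ for all $n \geq 2$; hence $\tilde{X}$ is contractible by Theorem \ref{-1}(1). By Theorem \ref{Cohen} the cellular chain complex $C_*(\tilde{X})$ is a complex of free $\Lambda$-modules, and contractibility makes the augmented complex $\cdots \to C_1(\tilde{X}) \to C_0(\tilde{X}) \to \mathbb{Z} \to 0$ exact, i.e.\ a free resolution of the trivial module $\mathbb{Z}$. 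Since $C_*(X) \cong C_*(\tilde{X}) \otimes_\Lambda \mathbb{Z}$, we obtain $H_n(X) \cong H_n\big(C_*(\tilde{X}) \otimes_\Lambda \mathbb{Z}\big)$, and this homology depends only on $\Lambda$ and $\mathbb{Z}$, not on the chosen resolution; so I may compute it from any convenient free resolution of $\mathbb{Z}$ over $\Lambda$.

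Next I would write down the standard periodic resolution. Fixing a generator $t$ of $\mathbb{Z}_m$, set $D = t - 1$ and the norm element $N = 1 + t + \cdots + t^{m-1}$ in $\Lambda \cong \mathbb{Z}[t]/(t^m - 1)$. The complex
\[
\cdots \xrightarrow{N} \Lambda \xrightarrow{D} \Lambda \xrightarrow{N} \Lambda \xrightarrow{D} \Lambda \xrightarrow{\epsilon} \mathbb{Z} \to 0,
\]
with $\epsilon$ the augmentation, is exact. Tensoring with the trivial module $\mathbb{Z}$ over $\Lambda$ replaces each copy of $\Lambda$ by $\mathbb{Z}$; because $t$ acts as the identity on $\mathbb{Z}$, the map $D = t-1$ becomes multiplication by $0$ and $N$ becomes multiplication by $m$, giving
\[
\cdots \xrightarrow{m} \mathbb{Z} \xrightarrow{0} \mathbb{Z} \xrightarrow{m} \mathbb{Z} \xrightarrow{0} \mathbb{Z} \to 0.
\]

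Finally I would read off the homology of this last complex. In positive degrees the groups alternate: in each odd degree the homology is $\ker(0)/\operatorname{im}(m) = \mathbb{Z}/m\mathbb{Z} \cong \mathbb{Z}_m$, while in each even degree it is $\ker(m)/\operatorname{im}(0) = 0$ (the map $m$ being injective on $\mathbb{Z}$), which is exactly the assertion. The only step requiring genuine care — the rest being formal — is the exactness of the periodic resolution; this reduces to the three identities $\ker \epsilon = \operatorname{im} D$, $\ker D = \operatorname{im} N$, and $\ker N = \operatorname{im} D$ in $\Lambda$, which follow from the elementary observations that $t$ cyclically permutes the basis $1, t, \dots, t^{m-1}$, that $Nt = N$ (so that $N\Lambda = N\mathbb{Z}$), and that $(t-1)N = t^m - 1 = 0$. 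Alternatively, one can avoid the algebra entirely by using the geometric model $X \simeq S^\infty/\mathbb{Z}_m$, the infinite lens space, whose standard CW structure has a single cell in each dimension and cellular boundary maps alternately $0$ and $m$, producing the same complex.
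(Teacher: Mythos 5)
Your proof is correct. The paper itself offers no argument for this lemma --- it is quoted directly from Hatcher's Example 2.43 --- and that source computes $H_*(K(\mathbb{Z}_m,1))$ geometrically, from the infinite lens space $S^\infty/\mathbb{Z}_m$ with one cell in each dimension and cellular boundary maps alternately $0$ and multiplication by $m$. Your primary route is the algebraic counterpart: you reduce to group homology by showing the universal cover $\tilde{X}$ is contractible (a correct use of Theorems \ref{Hat1} and \ref{-1} of the paper), so that by Theorem \ref{Cohen} the augmented complex $C_*(\tilde{X}) \to \mathbb{Z} \to 0$ is a free resolution of $\mathbb{Z}$ over $\Lambda = \mathbb{Z}[\mathbb{Z}_m]$, and then replace it by the periodic resolution built from $D = t-1$ and the norm $N = 1 + t + \cdots + t^{m-1}$. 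Your exactness checks ($\ker\epsilon = \operatorname{im} D$, $\ker D = \operatorname{im} N = \mathbb{Z}N$ since $Nt = N$, $\ker N = \operatorname{im} D$) are right, and both routes land on the same complex $\cdots \xrightarrow{m} \mathbb{Z} \xrightarrow{0} \mathbb{Z} \xrightarrow{m} \mathbb{Z} \xrightarrow{0} \mathbb{Z} \to 0$, whose homology is visibly $\mathbb{Z}_m$ in odd degrees and $0$ in positive even degrees. What your version buys is model-independence: $H_n(X) \cong \operatorname{Tor}_n^{\Lambda}(\mathbb{Z},\mathbb{Z})$ needs no explicit construction of a $K(\mathbb{Z}_m,1)$ and generalizes immediately to other coefficients; the cost is invoking two standard facts from outside the paper's toolkit, namely $C_*(X) \cong C_*(\tilde{X}) \otimes_\Lambda \mathbb{Z}$ and the comparison theorem guaranteeing independence of the chosen resolution. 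Your closing observation that the lens-space model reproduces the same complex is exactly Hatcher's original argument, so your proposal in fact contains the cited proof as its special case.
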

\begin{lemma}\label{L2}
The capacity of projective plane $\mathbb{CP}^2$ equals to 2.
\end{lemma}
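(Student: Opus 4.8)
The plan is to prove that, up to homotopy equivalence, the only CW-complexes homotopy dominated by $\mathbb{CP}^2$ are a single point and $\mathbb{CP}^2$ itself, giving $C(\mathbb{CP}^2)=2$. So suppose $A$ is homotopy dominated by $\mathbb{CP}^2$, say by maps $i:A\lo \mathbb{CP}^2$ and $r:\mathbb{CP}^2\lo A$ with $r\circ i\simeq \mathrm{id}_A$. Then $r_*\circ i_*$ is the identity on every homotopy and homology group of $A$, so $\pi_1(A)$ is a retract of $\pi_1(\mathbb{CP}^2)=1$ and is therefore trivial, and each $H_n(A)$ is a direct summand of $H_n(\mathbb{CP}^2)$. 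Since $H_n(\mathbb{CP}^2)=\mathbb{Z}$ for $n\in\{0,2,4\}$ and $0$ otherwise, each $H_n(A)$ is $0$ or $\mathbb{Z}$ and vanishes outside degrees $0,2,4$. Thus $A$ is simply connected and falls into one of four cases according to $(H_2(A),H_4(A))\in\{0,\mathbb{Z}\}^2$.

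I would then eliminate the cases one at a time. If $(H_2(A),H_4(A))=(0,0)$, then $A$ is simply connected with trivial reduced homology, hence contractible by Theorem \ref{-1}(2); this yields the point. If $(0,\mathbb{Z})$, then $A$ is a Moore space $M(\mathbb{Z},4)$, so $A\simeq \mathbb{S}^4$ by Theorem \ref{-2}; but then the domination would exhibit a map $\mathbb{S}^4\lo \mathbb{CP}^2$ with a left homotopy inverse, which is impossible since $\pi_4(\mathbb{CP}^2)=0$ forces every map $\mathbb{S}^4\lo \mathbb{CP}^2$ to be null-homotopic (here $\pi_4(\mathbb{CP}^2)=0$ is read off the fibration $\mathbb{S}^1\lo \mathbb{S}^5\lo \mathbb{CP}^2$). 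The case $(\mathbb{Z},0)$, where $A\simeq \mathbb{S}^2$, is excluded by the ring structure of $H^*(\mathbb{CP}^2;\mathbb{Z})\cong\mathbb{Z}[x]/(x^3)$ with $|x|=2$: writing $r^*(y)=cx$ for the generator $y\in H^2(\mathbb{S}^2)$ and using $y^2=0$ gives $c^2x^2=0$, so $c=0$ and $i^*r^*(y)=0\neq y$, contradicting $r\circ i\simeq\mathrm{id}$.

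For the final case $(\mathbb{Z},\mathbb{Z})$, $A$ is a simply connected complex with a single $2$-cell and a single $4$-cell, so $A\simeq \mathbb{S}^2\cup_{k\eta}e^4$ for some $k\in\mathbb{Z}$, where $\eta$ generates $\pi_3(\mathbb{S}^2)$; equivalently, for generators $u\in H^2(A)$ and $v\in H^4(A)$ one has $u^2=kv$. I would again propagate the cup product through the domination: from $i^*r^*=\mathrm{id}$ on $H^2$ and $H^4$ one gets $r^*(u)=ax$ and $r^*(v)=bx^2$ with $a,b=\pm1$, and comparing $r^*(u^2)=(r^*u)^2$ with $r^*(kv)$ yields $a^2=kb$, forcing $k=\pm1$ and hence $A\simeq\mathbb{CP}^2$ by the standard classification of such two-cell complexes. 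Collecting the four cases leaves exactly the point and $\mathbb{CP}^2$, so $C(\mathbb{CP}^2)=2$. The crux, and the main obstacle, lies in these last two cases: homology cannot by itself distinguish $\mathbb{CP}^2$ from $\mathbb{S}^2\vee\mathbb{S}^4$ (the case $k=0$) or from the other complexes $\mathbb{S}^2\cup_{k\eta}e^4$, so the argument must exploit the multiplicative structure of cohomology, while the exclusion of $\mathbb{S}^4$ rests instead on the homotopy-theoretic input $\pi_4(\mathbb{CP}^2)=0$.
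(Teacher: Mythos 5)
Your proposal is correct, and it shares the paper's overall skeleton: reduce to the four cases $(H_2(A),H_4(A))\in\{0,\mathbb{Z}\}^2$, dispose of $(0,0)$ by contractibility and of $(0,\mathbb{Z})$ (i.e. $A\simeq\mathbb{S}^4$) by $\pi_4(\mathbb{CP}^2)=0$, exactly as the paper does. But in the two remaining cases your arguments genuinely diverge. For $A\simeq\mathbb{S}^2$ the paper stays with homotopy groups: a domination would make $\pi_3(\mathbb{S}^2)\cong\mathbb{Z}$ embed in $\pi_3(\mathbb{CP}^2)\cong\pi_3(\mathbb{S}^5)=0$; your cup-product argument (write $r^*(y)=cx$, use $y^2=0$ to force $c=0$, contradicting $i^*r^*=\mathrm{id}$) is an equally valid alternative that trades knowledge of $\pi_3(\mathbb{CP}^2)$ for the ring structure $H^*(\mathbb{CP}^2)\cong\mathbb{Z}[x]/(x^3)$. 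The real divergence is in the case $(\mathbb{Z},\mathbb{Z})$, which you rightly call the crux. The paper's argument there is much softer and shorter: the domination map $g:\mathbb{CP}^2\lo A$ is surjective on each $H_n$, an epimorphism between isomorphic Hopfian groups (copies of $\mathbb{Z}$) is automatically an isomorphism, and the homology Whitehead theorem (Theorem \ref{3031}) then makes $g$ a homotopy equivalence --- no structural classification of $A$ is ever needed. You instead first realize $A$ as a two-cell complex $\mathbb{S}^2\cup_{k\eta}e^4$ (this step silently uses the minimal CW model for simply connected complexes with finitely generated homology, plus $\pi_3(\mathbb{S}^2)\cong\mathbb{Z}$), identify $k$ with the cup-product coefficient $u^2=kv$ (the Hopf invariant), and push cup products through the domination to force $k=\pm1$; you should also spell out why $a,b=\pm1$ (from $u=a\,i^*(x)$ with $u$ a generator) and why $\mathbb{S}^2\cup_{-\eta}e^4\simeq\mathbb{CP}^2$. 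Your route costs heavier machinery but buys more information: it shows explicitly which candidate two-cell complexes fail to be dominated (notably $\mathbb{S}^2\vee\mathbb{S}^4$, the case $k=0$), which is exactly the comparison that the paper's Section 4 exploits to answer Borsuk's question.
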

\begin{proof}
Suppose that $A$ is homotopy dominated by $\mathbb{CP}^2$. Then $A$ is  simply connected and $H_n (A)$ is isomorphic to a direct summand of  $H_n (\mathbb{CP}^2 )$. Since $H_n (\mathbb{CP}^2 )\cong \mathbb{Z}$ for $n=0,2,4$ and $H_n (\mathbb{CP}^2)=0$ otherwise, we have the following cases:
\begin{enumerate}[a)]
\item
If $H_2 (A),H_4 (A)=0$, then by part (2) of Theorem \ref{-1}, $A$ is homotopy equivalent to a one-point space.
\item
If $H_2 (A)\cong \mathbb{Z}$ and $H_4 (A)=0$, then $A$ is the Moore space $M(\mathbb{Z},2)$ and so, by Theorem \ref{-2}, it is homotopy equivalent to $\mathbb{S}^2$. But $\mathbb{S}^2$ can not be homotopy dominated by $\mathbb{CP}^2$. Since if $\mathbb{S}^2 \leqslant \mathbb{CP}^2$, then $\pi_3 (\mathbb{S}^2 )\cong \mathbb{Z}$ is isomorphic to a subgroup of $\pi_3 (\mathbb{CP}^2 )\cong \pi_3 (\mathbb{S}^5)=0$ which is a contradiction (note that by \cite[Exercise 11.47]{1}, $\pi_q (\mathbb{CP}^n )\cong \pi_q (\mathbb{S}^{2n+1} )$ for all $q\geq 3$).
\item
If $H_2 (A)=0$ and $H_4 (A)\cong \mathbb{Z}$, then $A$ is the Moore space $M(\mathbb{Z},4)$ and so, by Theorem \ref{-2}, it is homotopy equivalent to $\mathbb{S}^4$. Similar to the case (b), $\mathbb{S}^4$ can not be homotopy dominated by $\mathbb{CP}^2$ since $\pi_4 (\mathbb{S}^4 )\cong \mathbb{Z}$ is not isomorphic to a subgroup of $\pi_4 (\mathbb{CP}^2 )\cong \pi_4 (\mathbb{S}^5)=0$.
\item
 Let $H_2 (A)\cong \mathbb{Z}$ and $H_4 (A)\cong \mathbb{Z}$. Suppose $g:\mathbb{CP}^2 \longrightarrow A$ is the domination map. It is easy to see that $g_* : H_n (\mathbb{CP}^2 )\longrightarrow H_n (A)$ is an  epimorphism between two isomorphic Hopfian groups for all $n\geq 2$ which implies that $g_*$ is isomorphism for all $n\geq 2$. Therefore by Theorem \ref{3031}, $g$ is a homotopy equivalence and hence, $A$ and $\mathbb{CP}^2$ have the same homotopy type.
\end{enumerate}
Thus $A$ can only be homotopy equivalent to either one-point space or $\mathbb{CP}^2$ and so, $C(\mathbb{CP}^2 )=2$.
\end{proof}

Now, we are in a position to give a negative answer to the question of Borsuk. 
\begin{theorem}
The capacity of a compactum is not determined by its homology properties, in general.
\end{theorem}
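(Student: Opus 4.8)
The plan is to exhibit two compacta having identical homology groups in every degree yet different capacities. Since capacity is a homotopy invariant (as noted in the introduction) and their homologies agree completely, such a pair directly refutes the assertion that homology determines capacity. The example I would use is the complex projective plane $\mathbb{CP}^2$ together with the wedge sum $\mathbb{S}^2 \vee \mathbb{S}^4$; both are finite polyhedra and hence compacta.

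First I would confirm that the two spaces have the same homology. By the theorem of the excerpt recording $H_p(\mathbb{CP}^n)$, we have $H_0(\mathbb{CP}^2) \cong H_2(\mathbb{CP}^2) \cong H_4(\mathbb{CP}^2) \cong \mathbb{Z}$, with all other homology groups vanishing. On the other side, the reduced homology of a wedge splits as the direct sum of the reduced homologies of the summands, so $\tilde{H}_n(\mathbb{S}^2 \vee \mathbb{S}^4) \cong \tilde{H}_n(\mathbb{S}^2) \oplus \tilde{H}_n(\mathbb{S}^4)$, which is $\mathbb{Z}$ in degrees $2$ and $4$ and trivial otherwise (together with $H_0 \cong \mathbb{Z}$). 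Hence $H_n(\mathbb{CP}^2) \cong H_n(\mathbb{S}^2 \vee \mathbb{S}^4)$ for every $n$.

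Next I would read off the two capacities from results already in hand. Lemma \ref{L2} gives $C(\mathbb{CP}^2) = 2$. For the wedge, Theorem \ref{ASL} applied with $I = \{2,4\}$ and $i_2 = i_4 = 1$ yields $C(\mathbb{S}^2 \vee \mathbb{S}^4) = \prod_{n\in I}(i_n+1) = (1+1)(1+1) = 4$ (this is precisely the special case $C(\mathbb{S}^m \vee \mathbb{S}^n)=4$ for $m,n\geq 2$, $m\neq n$ quoted in Section 3). Since $2 \neq 4$ while the homology groups coincide, the capacity cannot be a function of the homology, which is exactly the negative answer sought.

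There is no serious analytic obstacle here: once $C(\mathbb{CP}^2)$ and $C(\mathbb{S}^2\vee\mathbb{S}^4)$ are both computed, the statement is immediate, so the whole difficulty lies in selecting an example whose two capacity computations are already available. The one point I would be careful about is the intended meaning of ``homology properties''. I would interpret it as the sequence of integral homology groups, and I would emphasize that the two spaces agree not merely group-by-group but as graded abelian groups; thus no packaging of the homological data distinguishes them, even though their capacities differ.
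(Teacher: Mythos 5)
Your proposal is correct and follows essentially the same route as the paper: both take $X=\mathbb{S}^2\vee\mathbb{S}^4$ and $Y=\mathbb{CP}^2$, check that their integral homology groups agree in every degree, and then invoke Theorem~\ref{ASL} to get $C(X)=4$ and Lemma~\ref{L2} to get $C(Y)=2$. The only difference is expository: you spell out the wedge-sum homology computation and the interpretation of ``homology properties'' in more detail than the paper does.
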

\begin{proof}
Consider  $X = \mathbb{S}^2 \vee \mathbb{S}^4$ and $Y= \mathbb{CP}^2$. It is easy to see that $H_n (X) \cong H_n(Y)$ for  all $n\geq 0$ ($ H_n (X)\cong H_n (Y)\cong \mathbb{Z}$ for $n = 0, 2, 4$ and $H_n (X) =H_n (Y)= 0$ otherwise). But by Theorem \ref{ASL}, $C(X)=4$ and by Lemma \ref{L2}, $C(Y)=2$.
\end{proof}
\begin{remark}
Note that the capacity of a compactum is not also determined by its homotopy groups. Consider $X=\mathbb{S}^2$ and $Y=\mathbb{S}^3 \times K(\mathbb{Z},2)$. For these simply connected spaces, we have $\pi_i (X)\cong \pi_i (Y)$ for all $i\geq 2$, however $C (X)=2$ and  $C(Y)\geq 4$ since $\{ *\}$, $\mathbb{S}^3$, $K(\mathbb{Z}, 2)$ and $Y$ are homotopy dominated by $Y$ which are not homotopy equivalent to each other.
\end{remark}

\end{document}